\documentclass[11pt]{article}
\usepackage{amssymb,amsmath,amsfonts}
\usepackage{stmaryrd}
\usepackage{color}
\usepackage[latin1]{inputenc}
\usepackage{multirow}
\usepackage{subfigure}
\usepackage{epsfig}
\usepackage{epic}
\usepackage{pstricks}
\usepackage{pst-node}
\usepackage{pstricks-add}

\tolerance 10000
\oddsidemargin -0.2cm \evensidemargin -0.2cm \textwidth=16cm
\textheight=23.cm \topmargin -0.5cm \unitlength=1cm
\parskip 2mm

\def\BBox{\kern  -0.2cm\hbox{\vrule width 0.2cm height 0.2cm}}

\newtheorem{lemma}{Lemma}[section]
\newtheorem{theorem}{Theorem}[section]
\newtheorem{definition}{Definition}[section]

\newtheorem{proposition}{Proposition}[section]
\newtheorem{remark}{Remark}[section]

\title{An explicit formula for obtaining  $(q+1,8)$-cages and
others small regular graphs of girth 8}

\author{ M. Abreu$^{1}$, G. Araujo-Pardo$^{2}$, C. Balbuena$^{3}$,
D. Labbate$^{4}$
\thanks{ Research   supported by the Ministerio de Educaci\'on y Ciencia,
Spain, the European Regional Development Fund (ERDF) under project
MTM2008-06620-C03-02;   and under the Catalonian Government
project 1298 SGR2009. CONACyT-M\'exico under project 57371 and
PAPIIT-M\'exico under project 104609-3. \newline \footnotesize{\em
Email addresses:} marien.abreu@unibas.it (M. Abreu),~
garaujo@matem.unam.mx (G. Araujo), ~ m.camino.balbuena@upc.edu (C.
Balbuena), \, \, ~ labbate@poliba.it (D. Labbate)}
 \\[2ex]
$^1${\footnotesize Dipartimento di Matematica, Universit\`{a} degli Studi della
Basilicata,}\\
{\footnotesize Viale dell'Ateneo Lucano, I-85100 Potenza, Italy.} \\$^2$
{\footnotesize Instituto de Matem\'{a}ticas, Universidad Nacional Aut\'{o}noma de M\'{e}xico,} \\
{\footnotesize M\'{e}xico D. F., M\'exico }\\
$^3${\footnotesize Departament de Matem\`atica Aplicada III, Universitat
Polit\`ecnica de Catalunya, }\\
{\footnotesize Campus Nord, Edifici C2, C/ Jordi Girona 1 i 3 E-08034 Barcelona,
Spain.} \\
$^4${\footnotesize Dipartimento di Matematica, Politecnico di
Bari,  I-70125 Bari, Italy.}}

\date{}

\begin{document}
\maketitle
\begin{abstract}
Let $q$ be a prime power;  $(q+1,8)$-cages  have been
constructed as incidence graphs of a non-degenerate quadric surface in projective
4-space $P(4, q)$. The first contribution of this paper is a
construction of these graphs in an alternative way by means of an
explicit formula using graphical terminology. Furthermore by removing
some specific perfect dominating sets from a $(q+1,8)$-cage  we
derive  $k$-regular graphs of girth 8 for  $k= q-1$ and $k=q$, having the
smallest number of vertices  known so far.

\end{abstract}

{\bf Keywords:}
Cages, girth, generalized quadrangles, perfect dominating sets.


\section{Introduction}

Throughout this paper, only undirected simple graphs without loops
or multiple edges are considered. Unless otherwise stated, we
follow  the book by Godsil and   Royle \cite{GR00} and the book by
Lint and Wilson \cite{LW94}  for terminology and definitions.

 Let $G$ be a graph with vertex set
$V=V(G)$ and edge set
  $E=E(G)$.  The \emph{girth}  of a graph $G$ is the number $g=g(G)$ of edges in a
smallest cycle. For every $v\in V$, $N_G(v)$ denotes the \emph{neighbourhood} of $v$,
that is, the set of all vertices adjacent to $v$. The \emph{degree} of a vertex $v\in V$ is the
cardinality of    $N_G(v)$.   A graph is called
\emph{regular} if all the vertices have the same degree. A  \emph{$(k,g)$-graph} is a  $k$-regular graph with girth $g$.  Erd\H os and Sachs
 \cite{ES63}     proved the existence of  $(k,g)$-graphs
 for all values of $k$ and $g$ provided that $k \ge 2$. Thus most work carried
 out has  focused on constructing a smallest one
 \cite{AFLN06,ABH10,BI73,B08,B09,B66,BMS95,E96,FH64,GH08,LU95,LUW97,M99,OW81,PBMOG04}.
A  \emph{$(k,g)$-cage} is a  $k$-regular graph with girth $g$ having the smallest possible
number of vertices. Cages have been  studied
intensely since they were introduced by
Tutte \cite{T47} in 1947.
Counting the numbers of vertices in the
distance partition with respect to a vertex yields a lower bound
$n_0(k,g)$ with the
precise form of the bound depending on whether $g$ is even or odd:

  \begin{equation}\label{lower} n_0(k,g) = \left\{ \begin{array}{ll} 1 + k + k(k-1) + \cdots
+ k(k-1)^{(g-3)/2} &\mbox{ if $g$ is odd};\\
2(1 +(k-1) +
\cdots + (k-1)^{g/2-1})&\mbox{ if $g$ is
even}.\end{array}\right.\end{equation}
 Biggs   \cite{B96}
 calls the \emph{excess} of a  $(k,g)$-graph $G$
the difference $|V(G)|-n_0(k,g)$. The    construction
of graphs with small excess is a difficult task. 
Biggs is the author of a report on distinct methods for constructing  cubic cages
  \cite{B98}.
More details about constructions of cages can be found in the survey by Wong
\cite{W82} or in the  book  by Holton and Sheehan \cite{HS93} or in  the more
recent dynamic cage survey by Exoo and Jajcay \cite{EJ08}.

A  $(k,g)$-cage with
 $n_0(k,g)$ vertices and even girth  exist only when $g\in \{4,6,8,12\}$ \cite{FH64}. If $g=4$ they are the complete bipartite graph $K_{k,k}$, and for $g=6,8,12$ these graphs are the incidence graphs of generalized $g/2$-gons of order $k-1$.  This is the main reason for $(k,g)$-cages with
 $n_0(k,g)$ vertices and even girth $g$  are called \emph{generalized polygon graphs}   \cite{B96}. In particular a $3$-gon of order $k-1$ is also known as a \emph{projective plane} of order $k-1$. The $4$-gons of order $k-1$ are called \emph{generalized quadrangles} of order $k-1$,
and, the $6$-gons of order $k-1$, \emph{generalized hexagons} of order $k-1$.  All these objets   are   known
to exist for all prime power values of $k-1$ \cite{B97,GR00,LW94}, and no example is known when $k-1$ is not a prime power.

In this article we focus on the case $g =8$. Let $q$ be a prime power.    Our main objective is to give an explicit construction of small $(k,8)$-graphs for $k=q-1,q$   and   $q+1 $. Next we present the contributions of this paper and in the following sections 
the corresponding proofs.

 $(q+1,8)$-cages  have been constructed by Benson
\cite{B66} as follows. Let $Q_4$ be a non-degenerate quadric
surface in projective 4-space $P(4, q)$. Define $G_8$ to be the
graph whose  vertices are the points and lines of $Q_4$, two  vertices
being joined if and only if they correspond to an incident
point-line pair in $Q_4$. Then $G_8$   is a   $(q +
1)$-regular graph of girth 8 with $n_0(q+1,8)$ vertices. The first contribution of this paper
is a construction of these graphs in an alternative way by means
of   an explicit formula given next.
\begin{definition}\label{gammaq}
Let $\mathbb{F}_q$ be a finite  field with $q\ge 2$ a prime power. Let $\Gamma_q=\Gamma_q[V_0,V_{1}]$ be a bipartite graph with vertex sets
$V_r=\{(a,b,c)_r, (q,q,a)_r: a\in \mathbb{F}_q \cup  \{q\} ,b, c \in \mathbb{F}_q \}$, $r=0,1$, and
edge set defined as follows:
$$ \begin{array}{l}\mbox{For all } a\in \mathbb{F}_q\cup \{q\}  \mbox{ and for all } b,c\in \mathbb{F}_q:\\[2ex]
N_{\Gamma_q}((a,b,c)_{1} )= \left\{\begin{array}{ll}
    \{(x,~ax+b,~a^2x+2ab+c)_{0}: x \in \mathbb{F}_q \}\cup \{(q,a,c)_{0} \}  &\mbox{ if }   a\in \mathbb{F}_q;
\\[2ex]
\{( c ,b,x )_{0}: x \in \mathbb{F}_q \}\cup \{(q,q,c)_{0} \}  &\mbox{
if }  a= q.
\end{array}\right.
\\
\mbox{}\\
N_{\Gamma_q}((q,q,a)_{1})= \{(q,a,x)_{0}: x \in \mathbb{F}_q \}\cup
\{(q,q,q)_{0} \}.
\end{array}
$$
\end{definition}

\begin{theorem}\label{main8}
The graph $\Gamma_q$ is a  $(q+1,8)$--cage on $2q^3+2q^2+2q+2$  vertices for each prime power $q$.
\end{theorem}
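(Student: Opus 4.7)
The plan is to verify three properties of $\Gamma_q$: the stated vertex count, $(q+1)$-regularity, and girth $8$. Since $n_0(q+1,8)=2(1+q+q^2+q^3)$ equals the proposed vertex count, a $(q+1)$-regular bipartite graph on this many vertices with girth $\geq 8$ cannot have girth $\geq 10$ (that would violate \eqref{lower} since $n_0(q+1,10)=2(1+q+q^2+q^3+q^4)$ is strictly larger), so establishing girth at least $8$ upgrades automatically to girth exactly $8$ and to cage status.

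The count is immediate: the families $\{(a,b,c)_r\}$ and $\{(q,q,a)_r\}$ are disjoint, because the second coordinate lies in $\mathbb{F}_q$ for the former and equals $q$ for the latter, so $|V_r|=(q+1)q^2+(q+1)=(q+1)(q^2+1)$, totalling $2(q+1)(q^2+1)=2q^3+2q^2+2q+2$. Regularity on the $V_1$ side is immediate from Definition~\ref{gammaq}; for $V_0$ I would split into the four cases $(x,y,z)_0$ with $x\in\mathbb{F}_q$, $(q,y,z)_0$ with $y,z\in\mathbb{F}_q$, $(q,q,a)_0$ with $a\in\mathbb{F}_q$, and $(q,q,q)_0$, and invert the defining formulas. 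For instance, $(x,y,z)_0$ with $x\in\mathbb{F}_q$ is a neighbour of $(a,b,c)_1$ (first branch) iff $b=y-ax$ and $c=z+a^2x-2ay$, yielding one neighbour for each $a\in\mathbb{F}_q$, and is also a neighbour of $(q,y,x)_1$ via the second branch, for a total of $q+1$. The remaining three cases are entirely analogous.

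The core of the proof is the girth bound. Since $\Gamma_q$ is bipartite, it suffices to exclude $4$- and $6$-cycles. For no $4$-cycle I would verify that $|N(v_1)\cap N(v_2)|\le 1$ for every pair of distinct $v_1,v_2\in V_1$. With three types of $V_1$-vertex this gives six subcases; in each one, requiring a candidate common neighbour $w\in V_0$ to lie in both neighbourhoods reduces to a small system in the parameter $x$ defining the appropriate branch, and one sees that two distinct ``lines'' $x\mapsto(x,ax+b,a^2x+2ab+c)$ meet in at most one point, while common neighbours of the exceptional form $(q,\cdot,\cdot)_0$ are pinned down by direct inspection.

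For no $6$-cycle, I would suppose a cycle $v_1-w_1-v_2-w_2-v_3-w_3-v_1$ with pairwise distinct $v_i\in V_1$ and pairwise distinct $w_i\in V_0$, so that each $w_i$ is a common neighbour of two consecutive $v_j$'s, and then chain the three identities determining $w_1,w_2,w_3$. The constraints are linear in the $b$ and $c$ parameters and quadratic in the $a$ parameters, and they collapse to $v_i=v_{i+1}$ for some $i$, contradicting distinctness. The main obstacle is the organization of these $6$-cycle subcases according to how many of the three $v_i$ have first coordinate equal to $q$; inside each subcase the algebra remains elementary manipulation over $\mathbb{F}_q$, but keeping track of the exceptional branches is where the bookkeeping becomes delicate.
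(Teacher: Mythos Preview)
Your proposal is correct in outline, but it takes a genuinely different route from the paper's argument. The paper does \emph{not} verify girth $8$ by a direct case analysis on $\Gamma_q$. Instead it builds $\Gamma_q$ in layers: it first proves that the ``generic'' subgraph $B_q$ (Definition~\ref{Bq}, the induced subgraph on vertices with all three coordinates in $\mathbb{F}_q$) is $q$-regular of girth $8$ via a clean Vandermonde-determinant argument (Proposition~\ref{bip8} and Lemma~\ref{HqisoBq}), and then adjoins the remaining $2(q^2+q+1)$ vertices in four successive stages $B_q\to B'_q\to B''_q\to B'''_q\to\Gamma_q$, at each stage checking only that the newly added vertices are joined to sets that are pairwise at distance $\ge 6$ (Claims~2--5). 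Thus after the Vandermonde step the paper never has to exclude a $6$-cycle directly; it only verifies distance conditions for small explicit families.

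Your direct approach works too, but note that in the all-generic $6$-cycle subcase (three lines $(a_i,b_i,c_i)_1$ with $a_i\in\mathbb{F}_q$ and three points with first coordinate in $\mathbb{F}_q$) the contradiction is not literally ``$v_i=v_{i+1}$'': when the $a_i$ are pairwise distinct you get $x_0=x_1=x_2$ from the Vandermonde system, which forces two \emph{points} to coincide, not two lines. You will in effect rediscover the paper's Proposition~\ref{bip8} inside that subcase. The trade-off is that the paper's layered construction keeps each verification local and short (and makes the link to the Lazebnik--Ustimenko graph $D(3,q)$ transparent), whereas your approach is self-contained but requires you to carry the full type bookkeeping---three $V_1$-types, hence up to ten unordered triples of types, several of which further bifurcate on whether the intermediate $V_0$-vertices have first coordinate $q$---through the $6$-cycle analysis.
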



\begin{remark}\label{rem}
\begin{itemize}
\item[(i)] Let $\Gamma_q$ be  a  $(q+1,8)$-cage obtained in Theorem \ref{main8}. Using geometrical terminology we call the elements of $V_1$ \emph{lines} and the elements of $V_{0}$ \emph{points}. Then $\Gamma_q$ is the incidence graph of a   classical  generalized quadrangle $Q(4,q) $.
\item[(ii)] The edge set of a $(q+1,8)$-cage  $\Gamma_q$ obtained in Theorem \ref{main8}  can equivalently be expressed as follows:$$ \begin{array}{l}\mbox{For all } x\in \mathbb{F}_q\cup \{q\}  \mbox{ and for all }  y,z\in \mathbb{F}_q:\\[2ex]
N_{\Gamma_q}((x,y,z)_{0} )= \left\{\begin{array}{ll}
    \{(a,~y-ax,~a^2x-2ay+z)_{1}: a \in \mathbb{F}_q \}\cup \{(q,y,x)_1 \}  &\mbox{ if }   x\in \mathbb{F}_q;
\\[2ex]
\{( y ,a,z )_{1}: a \in \mathbb{F}_q \}\cup \{(q,q,y)_{1} \}  &\mbox{ if }  x= q.
\end{array}\right.
\\
\mbox{}\\
N_{\Gamma_q}((q,q,z)_{0})= \{(q,a,z)_{1}: a \in \mathbb{F}_q \}\cup
\{(q,q,q)_{1} \}; \\[1ex]
N_{\Gamma_q}((q,q,q)_{0})= \{(q,q,x)_{1}: x \in \mathbb{F}_q \cup\{q\}\}.
\end{array}
$$
\end{itemize}
\noindent Therefore, if $q$ is even, $2a=0$ for all $a\in
\mathbb{F}_q$  yielding that if  the partite sets $V_0$ and $V_{1}$ are
interchanged   the same graph $\Gamma_q$ is obtained. Equivalently, if
$q$ is even (in geometrical terminology) the corresponding
generalized quadrangle $Q(4,q) $ is said to be \emph{self-dual.}

\end{remark}

A bipartite graph is said to be \emph{balanced} if each partite set has the same number of vertices. Let   $q\ge 2$ be a prime  power. In
what follows we construct $(k,8)$-regular balanced
bipartite graphs for $k=q-1$  and $k = q$ with smallest known order.
We will use the following notation.
  Given an integer $t\ge 1$, a graph $G$ and a vertex $u\in V(G)$,  let  $N_G^t(u)=\{x\in V(G):
d_G(u,x)= t\}$, and   $N_G^t[u]=\{x\in V(G): d_G(u,x)\le t\}$,
where $d_G(u,x)$ denotes the distance between $u$ and $x$ in $G$. Note that  $N_G^1(u)=N_G (u)$.
A subset $U\subset V(G)$ is said to be   \emph{a perfect  dominating set of $G$} if for each vertex $x\in V(G)\setminus U$,  $|N_G(x)\cap U|=1$ \cite{HHS98}. Let $\Gamma_q=\Gamma_q[V_{0},V_{1}]$ be the $(q+1, 8)$-cage constructed in Theorem \ref{main8}. Suppose that $U$ is a  perfect dominating set of $\Gamma_q$, then $\Gamma_q-U$ is a $q$-regular graph of girth  8. Thus it is of interest to find the largest  perfect dominating set of $\Gamma_q$. In the following theorem we find perfect dominating sets of orders  $2(q^2+1)$, $ 2(q^2+3q+1)$
 for any prime power $q$,  and of order $2(q^2+4q+3)$  for even prime powers $q$.


\begin{theorem}\label{perfect}Let $q\ge 2$ be a  prime power  and   $\Gamma_q=\Gamma_q[V_{0},V_{1}]$ the $(q+1, 8)$-cage constructed in Theorem \ref{main8}.  The following sets are perfect dominating in  $\Gamma_q$:
\begin{itemize}
\item[(i)] $A=N_{\Gamma_q}^2[\alpha]\cup N_{\Gamma_q}^2[\beta] $ where $\alpha,\beta \in V(\Gamma_q)$
and $\beta\in N^3_{\Gamma_q}(\alpha)$.  Further $|A |= 2(q+1)^2$.

\item[(ii)]
{\small $ B=\displaystyle \bigcup_{c\in \mathbb{F}_q} N_{\Gamma_q}[(q,0,c)_{1}]\cup N_{\Gamma_q}[(q,q,0)_{1}]\cup\left( \bigcap_{c\in \mathbb{F}_q}N_{\Gamma_q}^2[(q,0,c)_{1}]\cap N_{\Gamma_q}^2[  (q,q,0)_{1}]\right)\cup N_{\Gamma_q}^2[(q,q,\xi)_{1}] ,$} where $\xi\in \mathbb{F}_q\setminus \{0\}$. Further $|B|=2(q^2+3q+1)$.

\item[(iii)]
    $$\begin{array}{lll}C&=&\displaystyle \bigcup_{x\in \mathbb{F}_q\cup \{q\}} N_{\Gamma_q}[(q,x,0)_{0}]\cup \left(\bigcap_{x\in \mathbb{F}_q\cup \{q\}} N_{\Gamma_q}^2[(q,x,0)_{0}] \right) \cup\bigcup_{x\in \mathbb{F}_q}N_{\Gamma_q}[(x,x,p(x))_{1}]\\[3ex]
   &&\displaystyle  \cup N_{\Gamma_q}[  (q,1,1)_{1}]\cup \left( \bigcap_{x\in \mathbb{F}_q}N_{\Gamma_q}^2[(x,x,p(x))_{1}]\cap N_{\Gamma_q}^2[  (q,1,1)_{1}]\right),\end{array}$$
where $q\ge 8$ is even and   $p(x)=1+x+x^2$ for all $x\in \mathbb{F}_q$. Further $|C|=2(q^2+4q+3)$.
\end{itemize}
\end{theorem}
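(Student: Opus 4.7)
The plan for all three parts follows a common skeleton: compute the cardinalities $|A|$, $|B|$, $|C|$ by an inclusion--exclusion that accounts for how the constituent balls and intersections overlap, and then verify the perfect dominating property by showing that every vertex outside the set has exactly one neighbor in it. Throughout I would rely on the fact that $\Gamma_q$ is the incidence graph of a generalized quadrangle, so it is bipartite of girth $8$ and diameter $4$, and every non-incident point--line pair is joined by a unique path of length $3$. The explicit adjacencies in Definition \ref{gammaq} and Remark \ref{rem}(ii) are used for the coordinate-level work in parts (ii) and (iii).

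For part (i) I argue purely structurally. Since $\beta\in N^3_{\Gamma_q}(\alpha)$, the two vertices lie in opposite partite sets and are joined by a \emph{unique} path $\alpha{-}u{-}v{-}\beta$ of length $3$; a short cycle-length and parity argument then forces $N^2_{\Gamma_q}[\alpha]\cap N^2_{\Gamma_q}[\beta]=\{u,v\}$, giving $|A|=2\bigl(1+(q+1)+q(q+1)\bigr)-2=2(q+1)^2$. For the domination step, take any $x\notin A$: bipartite parity together with $\mathrm{diam}(\Gamma_q)=4$ forces $x$ to be at distance $3$ from one of $\alpha,\beta$ and at distance $4$ from the other. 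Girth $8$ then yields exactly one neighbor of $x$ in the $N^2[\cdot]$ of the distance-$3$ vertex (two such neighbors would close a $6$-cycle through that vertex), and diameter $4$ yields no neighbor in the $N^2[\cdot]$ of the distance-$4$ vertex (all of $x$'s neighbors lie at distance $3$ from it). Hence $|N_{\Gamma_q}(x)\cap A|=1$.

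Parts (ii) and (iii) are handled by the same strategy but require the coordinate structure. For part (ii), I enumerate each building block: the closed neighborhoods $N[(q,0,c)_1]$ and $N[(q,q,0)_1]$ are read off from the $a=q$ case of Definition \ref{gammaq}; the intersection $\bigcap_c N^2[(q,0,c)_1]\cap N^2[(q,q,0)_1]$ is exactly the set of lines meeting all $q+1$ distinguished lines, a set identified by repeated use of the uniqueness axiom; and $N^2[(q,q,\xi)_1]$ is one further ball. Adding sizes and subtracting the overlaps yields $|B|=2(q^2+3q+1)$, and the single-neighbor property is verified by picking any $x\notin B$ and tallying $|N(x)\cap B|$ piece by piece in coordinates.

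Part (iii) is analogous but more subtle. The diagonal lines $(x,x,p(x))_1$, $x\in\mathbb{F}_q$, together with $(q,1,1)_1$ play the role of a distinguished set of $q+1$ lines, and the points $(q,x,0)_0$, $x\in\mathbb{F}_q\cup\{q\}$, a dual role. The identity $p(x)-p(x')=(x-x')(1+x+x')$ combined with the characteristic-two hypothesis (which kills the $2x^2$ term in the neighborhood formula and forces the unique candidate second coordinate $y_0=1$) is precisely what ensures these diagonal lines are mutually non-concurrent and interact with the chosen points in the pattern required for the cardinality to come out to $2(q^2+4q+3)$. After this geometric non-degeneracy step, the cardinality and domination calculations mirror those of part (ii). The main obstacle in the whole theorem is precisely this coordinate bookkeeping in parts (ii) and (iii): verifying that no \emph{unexpected} incidences arise among the chosen lines and points, so that the inclusion--exclusion terms and the single-neighbor tallies come out with the correct multiplicities. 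It is here that the choice $p(x)=1+x+x^2$ together with $q$ even enters essentially.
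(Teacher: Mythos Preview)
Your plan is correct and follows essentially the same approach as the paper: part (i) is the same purely structural girth-and-diameter argument, and parts (ii) and (iii) are handled by the same explicit coordinate bookkeeping using Definition~\ref{gammaq} and Remark~\ref{rem}(ii). The one methodological difference worth noting is the final domination step in part (iii): you propose to mirror part (ii) and verify $|N(v)\cap C|=1$ vertex by vertex in coordinates, whereas the paper instead observes that the induced subgraph on $C=R_0\cup R_1$ has diameter $5$ (so girth $8$ forces $|N(v)\cap C|\le 1$ for every $v\notin C$) and then concludes by a global edge count, showing that the number of edges from $C$ into its complement equals $|V(\Gamma_q)\setminus C|$, which upgrades the inequality to equality everywhere at once; this counting shortcut avoids a second round of case analysis.
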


The perfect dominating sets described in item $(ii)$ and $(iii)$ of Theorem \ref{perfect}
are depicted in Figure \ref{estruc} and in Figure \ref{estruc2}
respectively.


\begin{figure}[h]
\begin{center}
\psset{unit=0.7cm, linewidth=0.03cm}
   \begin{pspicture}(0,0)(14,10)

\cnode*(1,2){0.08}{000}\put(-0.6,2){\tiny$(0,0,0)_{0}$}\put(1.2,2){$\cdots$}
\cnode*(2,2){0.08}{00j}\put(1.5,1.6){\tiny$(0,0,j)_{0}$}
\cnode*(3,2){0.08}{100}\put(3,1.6){\tiny$(1,0,0)_{0}$}\put(3.2,2){$\cdots$}
\cnode*(4,2){0.08}{10j}\put(4.1,2.2){\tiny$(1,0,j)_{0}$}

\cnode*(7,2){0.08}{q00}\put(6.5,1.6){\tiny$(q,0,0)_{0}$}\put(7.2,2){$\cdots$}
\cnode*(8,2){0.08}{q0j}\put(8.,1.6){\tiny$(q,0,j)_{0}$}

\cnode(1,5){0.08}{Lq00}\put(-0.6,5.3){\tiny$(q,0,0)_{1}$}

\cnode(3,5){0.08}{Lq10}\put(1.5,5.3){\tiny$(q,0,1)_{1}$} \put(4,5){$\cdots\quad \cdots$}

\cnode(7,5){0.08}{Lqq0}\put(5.5,5.3){\tiny$(q,q,0)_{1}$}\put(7.2,5){$\cdots$}
\cnode(8,5){0.08}{Lqqj}\put(8,5.3){\tiny$(q,q,j)_{1}$}

\cnode(10.5,5){0.08}{L110}\put(9.6,4.6){\tiny$(\xi, 0,1)_{1}$}\put(11.,5){$\cdots$}
\cnode(12,5){0.08}{L11j}\put(11.,4.6){\tiny$(\xi, t,1)_{1}$}
\cnode(13.2,5){0.08}{L100}\put(12.5,4.6){\tiny$(\xi, 0,0)_{1}$}\put(13.5,5){$\cdots$}
\cnode(14.5,5){0.08}{L10j}\put(13.9,4.6){\tiny$(\xi, t,0)_{1}$}

\cnode(3,0){0.08}{L000}\put(2.5,-0.4){\tiny$(0,0,0)_{1}$}\put(4,0){$\cdots\quad \cdots$}
\cnode(7,0){0.08}{L0j0}\put(6.5,-0.4){\tiny$(0,0,j)_{1}$}

\cnode*(1,7.5){0.08}{qq0}\put(-0.6,7.5){\tiny$(q,q,0)_{0}$}
\cnode*(3,7.5){0.08}{qq1}\put(1.5,7.5){\tiny$(q,q,1)_{0}$} \put(4,7.5){$\cdots\quad \cdots$}

\cnode*(12,7.5){0.08}{q11}\put(10.5,7.5){\tiny$(q,\xi, 1)_{0}$}
\cnode*(7,7.5){0.08}{qqq}\put(7.2,7.5){\tiny$(q,q,q)_{0}$} \put(8.5,7.5){$\cdots\quad \cdots$}
\cnode*(14,7.5){0.08}{q10}\put(14,7.5){\tiny$(q,\xi, 0)_{0}$}

\cnode(5,9.5){0.08}{Lqqq}\put(4.5,9.7){\tiny$(q,q,q)_{1}$}
\cnode(11,9.5){0.08}{Lqq1}\put(10.5,9.7){\tiny$(q,q,\xi )_{1}$}

%

\ncline[]{-}    {   000 }   {   Lq00    }

\ncline[]{-}    {   100 }   {   Lq10    }

\ncline[]{-}    {   q00 }   {   Lqq0    }

\ncline[]{-}    {   00j }   {   Lq00    }

\ncline[]{-}    {   10j }   {   Lq10    }

\ncline[]{-}    {   q0j }   {   Lqq0    }

\ncline[]{-}    {   L000    }   {   000 }
\ncline[]{-}    {   L000    }   {   100 }
\ncline[]{-}    {   L000    }   {   q00 }

\ncline[]{-}    {   L0j0    }   {   00j }
\ncline[]{-}    {   L0j0    }   {   10j }
\ncline[]{-}    {   L0j0    }   {   q0j }

\ncline[]{-}    {   qq0 }   {   Lq00    }
\ncline[]{-}    {   q10 }   {   L100    }
\ncline[]{-}    {   qq1 }   {   Lq10    }

\ncline[]{-}    {   q11 }   {   L110    }
\ncline[]{-}    {   q11 }   {   L11j    }
\ncline[]{-}    {   qqq }   {   Lqqj    }
\ncline[]{-}    {   qqq }   {   Lqq0    }
\ncline[]{-}    {   q10 }   {   L10j    }

\ncline[]{-}    {   Lqqq    }   {   qq0 }
\ncline[]{-}    {   Lqqq    }   {   qq1 }
\ncline[]{-}    {   Lqqq    }   {   qqq }
\ncline[]{-}    {   Lqq1    }   {   qqq }
\ncline[]{-}    {   Lqq1    }   {   q11 }
\ncline[]{-}    {   Lqq1    }   {   q10 }

\end{pspicture}\end{center}
\caption{\label{estruc} Deleted subgraph in $(ii)$ of Theorem
\ref{q}.}
\end{figure}

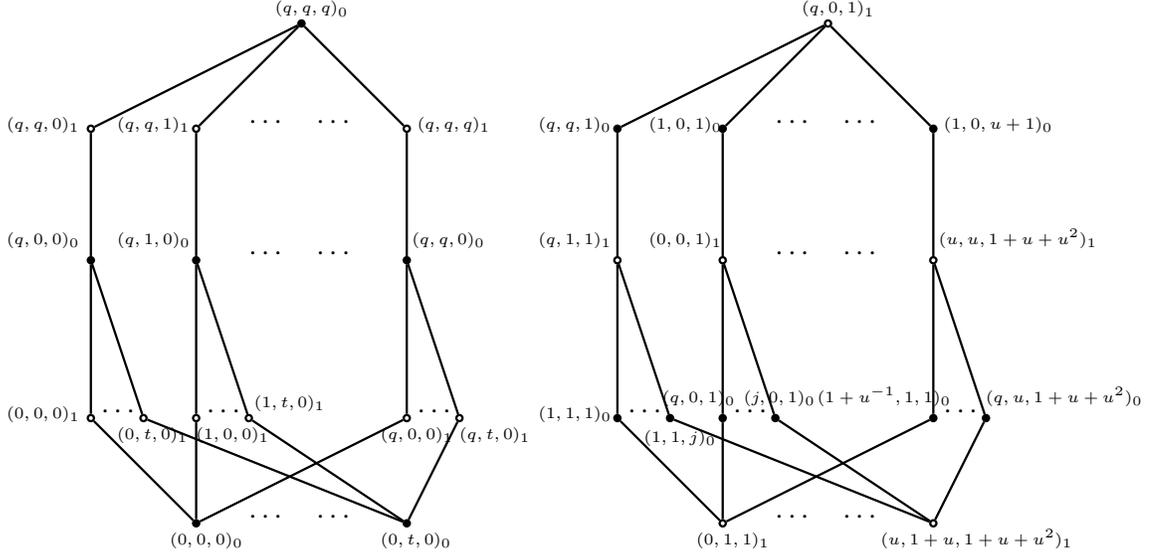
\begin{figure}[h]
\begin{center}
\psset{unit=0.7cm, linewidth=0.03cm}
   \begin{pspicture}(0,0)(17,10)

\cnode(1,2){0.08}{000}\put(-0.6,2){\tiny$(0,0,0)_{1}$}\put(1.2,2){$\cdots$}
\cnode(2,2){0.08}{00j}\put(1.5,1.6){\tiny$(0,t,0)_{1}$}
\cnode(3,2){0.08}{100}\put(3,1.6){\tiny$(1,0,0)_{1}$}\put(3.2,2){$\cdots$}
\cnode(4,2){0.08}{10j}\put(4.1,2.2){\tiny$(1,t,0)_{1}$}

\cnode(7,2){0.08}{q00}\put(6.5,1.6){\tiny$(q,0,0)_{1}$}\put(7.2,2){$\cdots$}
\cnode(8,2){0.08}{q0j}\put(8.,1.6){\tiny$(q,t,0)_{1}$}

\cnode*(1,5){0.08}{Lq00}\put(-0.6,5.3){\tiny$(q,0,0)_{0}$}

\cnode*(3,5){0.08}{Lq10}\put(1.5,5.3){\tiny$(q,1,0)_{0}$}
\put(4,5){$\cdots\quad \cdots$}

\cnode*(7,5){0.08}{Lqq0}\put(7.1,5.3){\tiny$(q,q,0)_{0}$}

\cnode*(3,0){0.08}{L000}\put(2.5,-0.4){\tiny$(0,0,0)_{0}$}\put(4,0){$\cdots\quad
\cdots$} \cnode*(7,0){0.08}{L0j0}\put(6.5,-0.4){\tiny$(0,t,0)_{0}$}

\cnode(1,7.5){0.08}{qq0}\put(-0.6,7.5){\tiny$(q,q,0)_{1}$}
\cnode(3,7.5){0.08}{qq1}\put(1.5,7.5){\tiny$(q,q,1)_{1}$}
\put(4,7.5){$\cdots\quad \cdots$}

\cnode(7,7.5){0.08}{qqq}\put(7.2,7.5){\tiny$(q,q,q)_{1}$}

\cnode*(5,9.5){0.08}{Lqqq}\put(4.5,9.7){\tiny$(q,q,q)_{0}$}

%

\ncline[]{-}    {   000 }   {   Lq00    }

\ncline[]{-}    {   100 }   {   Lq10    }

\ncline[]{-}    {   q00 }   {   Lqq0    }

\ncline[]{-}    {   00j }   {   Lq00    }

\ncline[]{-}    {   10j }   {   Lq10    }

\ncline[]{-}    {   q0j }   {   Lqq0    }

\ncline[]{-}    {   L000    }   {   000 } \ncline[]{-}    {   L000
}   {   100 } \ncline[]{-}    {   L000    }   {   q00 }

\ncline[]{-}    {   L0j0    }   {   00j } \ncline[]{-}    {   L0j0
}   {   10j } \ncline[]{-}    {   L0j0    }   {   q0j }

\ncline[]{-}    {   qq0 }   {   Lq00    }

\ncline[]{-}    {   qq1 }   {   Lq10    }

\ncline[]{-}    {   qqq }   {   Lqq0    } \ncline[]{-}    {   q10
}   {   L10j    }

\ncline[]{-}    {   Lqqq    }   {   qq0 } \ncline[]{-}    {   Lqqq
}   {   qq1 } \ncline[]{-}    {   Lqqq    }   {   qqq }

\cnode*(11,2){0.08}{q01}\put(9.5,2){\tiny$(1,1,1)_{0}$}
\put(11.2,2){$\cdots$}

\cnode*(12,2){0.08}{j01}\put(11.5,1.55){\tiny$(1,1,j)_{0}$}

\cnode*(13,2){0.08}{111}\put(11.85 ,2.3) {\tiny$(q,0,1)_{0}$}
\put(13.2,2){$\cdots$}
\cnode*(14,2){0.08}{11j}\put(13.4,2.3){\tiny$(j,0,1)_{0}$}

\cnode*(17,2){0.08}{v}\put(14.8,2.3){\tiny$(1+u^{-1},1,1)_{0}$}\put(17.2,2){$\cdots$}

\cnode*(18,2){0.08}{q0j}\put(18.,2.3){\tiny$(q,u,1+u+u^2)_{0}$}

\cnode(11,5){0.08}{L010}\put(9.5,5.3){\tiny$(q,1,1)_{1}$}

\cnode(13,5){0.08}{Lq11}\put(11.6,5.3){\tiny$(0,0,1)_{1}$}\put(14,5){$\cdots\quad
\cdots$}

\cnode(17,5){0.08}{Lu1}\put(17.1,5.3){\tiny$(u,u,1+u+u^2)_{1}$}

\cnode(13,0){0.08}{L011}\put(12.5,-0.4){\tiny$(0,1,1)_{1}$}\put(14,0){$\cdots\quad
\cdots$}
\cnode(17,0){0.08}{Lv}\put(16,-0.4){\tiny$(u,1+u,1+u+u^2)_{1}$}

\cnode*(11,7.5){0.08}{101}\put(9.5,7.5){\tiny$(q,q,1)_{0}$}
\cnode*(13,7.5){0.08}{qq1}\put(11.6,7.5){\tiny$(1,0,1)_{0}$}
\put(14,7.5){$\cdots\quad \cdots$}

\cnode*(17,7.5){0.08}{qqq}\put(17.2,7.5){\tiny$(1,0,u+1)_{0}$}

\cnode(15,9.5){0.08}{Bq10}\put(14.5,9.7){\tiny$(q,0,1)_{1}$}

%
\ncline[]{-}    {   q01 }   {   L011    } \ncline[]{-}    {   v
}   {   L011    } \ncline[]{-}    {   111 }   {   L011    }
\ncline[]{-}    {   q01 }   {   L010    }



\ncline[]{-}    {   j01 }   {   L010    } \ncline[]{-}    {   111
}   {   Lq11    } \ncline[]{-}    {   11j }   {   Lq11    }

\ncline[]{-}    {   q0j }   {   Lu1 } \ncline[]{-}    {   v   }
{   Lu1 }


\ncline[]{-}    {   Lv  }   {   j01 } \ncline[]{-}    {   Lv  }
{   11j } \ncline[]{-}    {   Lv  }   {   q0j }

\ncline[]{-}    {   101 }   {   L010    }

\ncline[]{-}    {   qq1 }   {   Lq11    }

\ncline[]{-}    {   qqq }   {   Lu1 } \ncline[]{-}    {   q10 }
{   L10j    }

\ncline[]{-}    {   Bq10    }   {   101 } \ncline[]{-}    {   Bq10
}   {   qq1 } \ncline[]{-}    {   Bq10    }   {   qqq }
\end{pspicture}\end{center}
\caption{\label{estruc2} Deleted subgraph in $(iii)$ of Theorem
\ref{q}.}
\end{figure}
\begin{remark}\label{rem2}
\begin{enumerate}
\item[(a)] Suppose $q=2$.  A cycle of
length 8 is obtained by eliminating from  the bipartite graph  $\Gamma_2$ the
vertices  of the set $B$ from
Theorem \ref{perfect}  $(ii)$. And   the $(3,8)$-cage can be partitioned into  the
two induced subgraphs shown
in Figure \ref{estruc2}.

\item[(b)]
For $q=4$,   $p(x)=1+x+x^2\in \{0,1\}$ for all $x\in \mathbb{F}_4$. Taking $\xi\in \mathbb{F}_4\setminus\{0,1\}$,  we can find, for the $(5,8)$-cage,  the following   perfect dominating set similar to $C$ of Theorem \ref{perfect}:
$$\begin{array}{lll}C'&=&\displaystyle \bigcup_{x\in \mathbb{F}_4} N_{\Gamma_4}[(4,x,\xi)_{0}]\cup N_{\Gamma_4}[(4,4,0)_{0}]\cup\left(\bigcap_{x\in \mathbb{F}_4\cup \{4\}} N_{\Gamma_4}^2[(4,x,\xi)_{0}]\cap N_{\Gamma_4}[(4,4,0)_{0}] \right)\\[3ex]  &&  \bigcup_{x\in \mathbb{F}_4}N_{\Gamma_4}[(x,x,p(x))_{1}]
   \displaystyle  \cup N_{\Gamma_4}[    (4,1,1)_{1} ]\cup \left( \bigcap_{x\in \mathbb{F}_4}N_{\Gamma_4}^2[(x,x,p(x))_{1}]\cap N_{\Gamma_4}^2[    (4,1,1)_{1} ]\right).\end{array}$$
   \end{enumerate}
\end{remark}

The following result is  an  immediate 
consequence of Theorem \ref{perfect} and Remark \ref{rem2} (b).

\begin{theorem}\label{q}Let $q\ge 2$ be a  prime power  and  $\Gamma_q=\Gamma_q[V_{0},V_{1}]$ the $(q+1, 8)$-cage constructed in Theorem \ref{main8}.
Removing from $\Gamma_q$ the perfect dominating sets from Theorem \ref{perfect},
$q$-regular graphs of girth 8 are obtained of orders $2q(q^2-1)$, $2q(q^2-2)$ for any
prime power $q$ or of order $2(q^3-3q-2)$ for even prime powers $q \ge 4$.
\end{theorem}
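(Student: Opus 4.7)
My plan is to apply Theorem~\ref{perfect} together with the defining property of a perfect dominating set. Recall that if $U \subset V(G)$ is a perfect dominating set of a $k$-regular graph $G$, then every vertex $x \in V(G) \setminus U$ has exactly one neighbor in $U$, so its degree in $G - U$ is exactly $k - 1$. Applied to $\Gamma_q$, which is $(q+1)$-regular by Theorem~\ref{main8}, this immediately gives that $\Gamma_q - A$, $\Gamma_q - B$, $\Gamma_q - C$, and (for $q = 4$) $\Gamma_q - C'$ are all $q$-regular.

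Next I would verify that the girth remains equal to $8$. Since each of these graphs is a vertex-induced subgraph of $\Gamma_q$ and $\Gamma_q$ has girth $8$ by Theorem~\ref{main8}, the girth is automatically at least $8$. To rule out girth strictly greater than $8$, it is enough to exhibit a single 8-cycle of $\Gamma_q$ that avoids the deleted set. Because $|A|,|B|,|C|,|C'|$ are all of order $O(q^2)$ while $\Gamma_q$ carries abundantly many 8-cycles (every edge already lies on several of them), such a cycle is produced straightforwardly from the adjacency rules in Definition~\ref{gammaq} and Remark~\ref{rem}(ii) by picking three coordinates that miss the explicit description of $U$. The only borderline case is $\Gamma_2 - B$, whose $2\cdot 2\cdot (2^2-2) = 8$ remaining vertices are forced to form a single cycle; Remark~\ref{rem2}(a) already records that this cycle has length $8$.

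The orders are then pure arithmetic. Using $|V(\Gamma_q)| = 2q^3 + 2q^2 + 2q + 2$ from Theorem~\ref{main8} together with the cardinalities $|A| = 2(q+1)^2$, $|B| = 2(q^2 + 3q + 1)$, and $|C| = 2(q^2 + 4q + 3)$ provided by Theorem~\ref{perfect}, I compute
\begin{align*}
|V(\Gamma_q)| - |A| &= 2q(q^2 - 1),\\
|V(\Gamma_q)| - |B| &= 2q(q^2 - 2),\\
|V(\Gamma_q)| - |C| &= 2(q^3 - 3q - 2),
\end{align*}
matching the three orders stated. The case $q = 4$ is closed by observing that the alternative set $C'$ from Remark~\ref{rem2}(b) has the same cardinality as $C$. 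The entire substantive content has thus been absorbed into Theorem~\ref{perfect}; the present theorem is a short arithmetic corollary plus a brief girth verification, and the only real subtlety is the small-$q$ case $q = 2$.
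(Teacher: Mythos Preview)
Your proposal is correct and matches the paper's own treatment: the paper states Theorem~\ref{q} as ``an immediate consequence of Theorem~\ref{perfect} and Remark~\ref{rem2}(b)'' with no further argument, and your expansion---perfect dominating set gives $q$-regularity, induced subgraph gives girth $\ge 8$, subtraction gives the orders---is exactly that immediate consequence spelled out. You are in fact slightly more careful than the paper in addressing why the girth is \emph{exactly} $8$ rather than larger; the paper leaves this implicit.
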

G\'acs and H\'eger   \cite{GH08} obtain $(q,8)$-bipartite graphs  on $2q(q^2-2)$
vertices if $q$ is odd, or on $2(q^3-3q-2)$ vertices if $q$ is
even,  using a classical generalized quadrangle $GQ$ and assuming
that $GQ$  has a substructure called regular point-pair $(u, v)$. Note
that in Theorem \ref{q} we obtain explicitly $(q,8)$-bipartite graphs on the same cardinality 
using Definition \ref{gammaq} without assuming anything. 
Moreover, using classical GQ, Beukemann and   Metsch  \cite{BM10} prove that the
cardinality of a perfect dominating set $ B $ is at most $|B|\le 2(2q2+2q)$ and if $q$ is even
$|B|\le 2(2q2+q+1)$. 
And   $(k,8)$-regular balanced
bipartite graphs for all prime  powers  $q$  such that $3\le
k\le q$ of order $2k ( q^2-1)$ have been obtained  as   subgraphs
of the incidence graph of a generalized quadrangle \cite{ABH10}.
This result has been improved by constructing $(k,8)$-regular
balanced bipartite graphs of order $2q (kq -1)$  in  \cite{B09}.

 To finish we improve these results for the case $k=q-1$.
 \begin{definition}\label{GrGq}
Let $q\ge 4$ be a  prime power  and  $G_q$ the $q$-regular graph of girth 8 constructed in Theorem \ref{q} on $2q(q^2-2)$ vertices choosing $\xi\in \mathbb{F}_q\setminus \{0,1\}$.
\end{definition}

  Given a subset of vertices $S\subset V(G)$ we denote by $N_G(S)=\cup_{s\in S}N_G(s)$.

\begin{theorem}\label{q-1}
Let $q\ge 4$ be a  prime power  and   $G_q$ the graph from Definition~\ref{GrGq}.
Define $R=N_{G_q}(\{(q,y,z)_0: y,z\in \mathbb{F}_q, y\not=0,1,\xi\})\cap N^5_{G_q}((q,1,0)_0)$.
The set
 $$ S:= \bigcup_{z\in \mathbb{F}_q} N_{G_q}[(q,1,z)_0]\cup   N_{G_q}[R].$$
is perfect dominating in $G_q$.
Furthermore, $G_q-S$ a $(q-1)$-regular graph of girth 8 of order $2q(q-1)^2$.
\end{theorem}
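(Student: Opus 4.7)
The plan is to prove that $S$ is a perfect dominating set of $G_q$ of size $|S|=4q^2-6q$. Once this is established, the second assertion follows immediately: deleting a perfect dominating set from a $q$-regular graph reduces every surviving vertex's degree by exactly one, yielding a $(q-1)$-regular graph on $2q(q^2-2)-(4q^2-6q)=2q(q-1)^2$ vertices, while girth $8$ is inherited from $G_q$ (with equality realised by an $8$-cycle of $G_q$ disjoint from $S$, easy to exhibit since $q\ge 4$).

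I would begin by making $G_q$ explicit. Unpacking the deleted set $B$ of Theorem~\ref{perfect}(ii) with Definition~\ref{gammaq} and Remark~\ref{rem}(ii), the lines of $B$ are exactly $(q,0,c)_1$, $(q,q,y)_1$ for $y\in\mathbb{F}_q\cup\{q\}$, $(\xi,a,x)_1$, and (from the intersection term) $(0,0,c)_1$ for $c\in\mathbb{F}_q$, while the points of $B$ are $(c,0,x)_0$, $(q,0,x)_0$, $(q,\xi,x)_0$ and $(q,q,c)_0$ for the appropriate index ranges. In particular, for any surviving point $(q,y,z)_0$ with $y\in\mathbb{F}_q\setminus\{0,\xi\}$ the only deleted neighbor is $(q,q,y)_1$, so that $N_{G_q}((q,y,z)_0)=\{(y,a,z)_1:a\in\mathbb{F}_q\}$. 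Next I would compute $|S|$: the $q$ closed neighborhoods $N_{G_q}[(q,1,z)_0]$ are pairwise disjoint by girth $8$, contributing $q(q+1)$ vertices, and a line $\ell=(y,a,z)_1$ with $y\in\mathbb{F}_q\setminus\{0,1,\xi\}$ sits at distance $3$ from $(q,1,0)_0$ in $\Gamma_q$ via the unique length-$3$ path through $(1,b,0)_1$, where $b=z/(1-y)-a$; a coordinate computation shows this path is broken in $G_q$ exactly when its intermediate point $(x,x+b,x+2b)_0$ (with $x=(a-b)/(1-y)$) lies in $B$, reducing to the algebraic condition $a=zy/(1-y^2)$ (with a separate analysis when $y=-1$ in odd characteristic). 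Enumerating the resulting description of $R$ and accounting for overlaps of the closed neighborhoods $N_{G_q}[r]$ for $r\in R$ (two lines meet in at most one point by girth~$8$) then yields $|N_{G_q}[R]|=3q^2-7q$, and so $|S|=4q^2-6q$.

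The final and most delicate step is verifying perfect domination: for every $v\in V(G_q)\setminus S$ one must show that exactly one of its $q$ $G_q$-neighbors lies in $S$. A case analysis by the coordinate type of $v$ (point versus line; first coordinate in $\mathbb{F}_q$ or equal to $q$; second and third coordinates relative to $0$, $1$, $\xi$) reduces the verification to routine checks against the neighbor formulas from Definition~\ref{gammaq}. The principal obstacle is ensuring that the two pieces $\bigcup_{z}N_{G_q}[(q,1,z)_0]$ and $N_{G_q}[R]$ do not jointly over-cover any vertex; this follows from the girth-$8$ hypothesis, since any vertex with neighbors in both pieces would yield a cycle of length at most~$6$ in $G_q$, contradicting its girth.
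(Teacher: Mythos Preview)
Your overall plan---compute $|S|$ and then verify perfect domination---matches the paper's, and your size computation $|S|=4q^{2}-6q$ agrees with theirs. However, the justification you give for the perfect domination step has a genuine gap.

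First, the count $|N_{G_q}[R]|=3q^{2}-7q$ is not a consequence of ``two lines meet in at most one point''; one needs the non-obvious equality $|N_{G_q}(R)|=2q(q-2)$. The paper proves this separately (Lemma~\ref{cll1}) by a counting argument: each $v\in N_{G_q}(R)\setminus P$ has at least two neighbours in $N_{G_q}^{3}((q,1,0)_0)$, which together with $|N_{G_q}^{3}((q,1,0)_0)|=q(q-1)^{2}$ forces the equality. Your proposal skips this.

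Second---and more seriously---your last sentence is wrong. Having a neighbour $(1,a,z)_1\in\bigcup_z N_{G_q}[(q,1,z)_0]$ and a neighbour $r\in R$ does \emph{not} produce a cycle of length~$\le 6$: these are simply two distinct lines through a point, and girth~$8$ says nothing further. More importantly, you have misidentified the obstacle. For a point $v\notin S$, any neighbour of $v$ lying in $N_{G_q}[R]$ must lie in $R$ itself, which would force $v\in N_{G_q}(R)\subset S$; so such $v$ is covered only by the first piece, and the question is whether it has \emph{exactly one} neighbour of the form $(1,a,z)_1$. Dually, a line $\ell\in N_{G_q}^{3}((q,1,0)_0)\setminus S$ has no neighbour in the first piece at all, and the entire burden is to show it has exactly one neighbour in $N_{G_q}(R)$. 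This is the heart of the matter, it is not handled by any girth argument, and it is not a routine coordinate check.

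The paper establishes this last point by edge-counting between distance classes from $(q,1,0)_0$: from $|N_{G_q}(R)|=2q(q-2)$ they compute $|N_{G_q}^{4}((q,1,0)_0)\setminus N_{G_q}(R)|=q(q-1)(q-2)$, then compare $|E[N^{3},N^{4}]|=q(q-1)^{3}$ with $|E[N^{3},N^{4}\setminus N_{G_q}(R)]|=q(q-1)^{2}(q-2)$ to obtain $|E[N^{3},N_{G_q}(R)]|=q(q-1)^{2}=|N^{3}|$, forcing exactly one such edge per vertex of $N^{3}$. Your proposal needs either this argument or a genuine replacement for it.
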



\section{$(q+1,8)$-cages}
In order to prove Theorem \ref{main8} we will first define two \emph{auxiliary} graphs $H_q$ and $B_q$ (c.f. Definitions \ref{Hq}, \ref{Bq}, which were inspired by the construction of Lazebnik and Ustimenko \cite{LU95}
    of a family of $q$-regular graphs $D(n,q)$, $n\ge 2$ and $q$ a prime power, of order $2q^n$ and girth at least $n+5$ for $n$ odd (and at least $n+4$ for $n$ even). In particular when $n=3$ the graph $D(3,q)$ has $2q^3$ vertices and girth 8.
    In what follows we construct another  $q$-regular bipartite graph  $H_q$  of girth 8
    as a first step  to achieve our goal. It can be checked that $D(3,q)$ and  $H_q$  are not isomorphic for
    $q\ge 3$.

\begin{definition}\label{Hq}
Let  $\mathbb{F}_q$ be a finite field with $q\ge 2$.
Let $H_{q }=H_q[U_0,U_1]$ be a bipartite graph with vertex set $U_r=\mathbb{F}_q\times \mathbb{F}_q\times \mathbb{F}_q$, $r=0,1$; and edge set $E(H_q)$ defined as follows:
$$
\mbox{For all }   a, b,c\in \mathbb{F}_q  : N_{H_q}((a,b,c)_{1} )=
    \{(x,~ax+b,~a^2x +c)_{0}: x \in \mathbb{F}_q \} .
 $$
\end{definition}
\begin{lemma}\label{claim01}
Let $H_q$ be the graph from Definition \ref{Hq}.
For any given $a\in \mathbb{F}_q $,  the vertices in the set
$\{(a,b,c)_{1}:  b,c \in \mathbb{F}_q \}$ are mutually at distance
at least four.
Also, for any given $x\in \mathbb{F}_q $, the vertices in the set
$\{(x,y, z)_{0}:  y,z  \in \mathbb{F}_q \}$ are mutually at distance at least four.
\end{lemma}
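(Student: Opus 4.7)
The plan is to exploit that $H_q$ is bipartite, so any two vertices in the same partite set are at even distance. Therefore proving distance at least $4$ is equivalent to proving that the two vertices have no common neighbour. I would thus reduce each of the two assertions to a short check that distinct vertices of the prescribed form have disjoint neighbourhoods.

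For the first assertion, fix $a\in\mathbb{F}_q$ and take $(a,b,c)_1$ and $(a,b',c')_1$ with $(b,c)\neq(b',c')$. By Definition \ref{Hq}, a common neighbour in $U_0$ would require parameters $x_1,x_2\in\mathbb{F}_q$ with
\[
(x_1,\,ax_1+b,\,a^2x_1+c)\;=\;(x_2,\,ax_2+b',\,a^2x_2+c').
\]
Equality of first coordinates forces $x_1=x_2$, and then the remaining coordinates give $b=b'$ and $c=c'$, contradicting the assumption. Hence no common neighbour exists and the distance is at least $4$.

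For the second assertion, fix $x\in\mathbb{F}_q$ and take $(x,y,z)_0$ and $(x,y',z')_0$ with $(y,z)\neq(y',z')$. A vertex $(a,b,c)_1\in U_1$ is adjacent to $(x,y,z)_0$ precisely when $y=ax+b$ and $z=a^2x+c$, i.e.\ when $b=y-ax$ and $c=z-a^2x$. If the same $(a,b,c)_1$ were also adjacent to $(x,y',z')_0$, the identical relations would read $b=y'-ax$ and $c=z'-a^2x$, giving $y=y'$ and $z=z'$, again a contradiction.

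No real obstacle is expected: once adjacency is rewritten parametrically, both cases collapse to trivial linear identities over $\mathbb{F}_q$. The only subtlety worth flagging is that one must use the bipartiteness of $H_q$ to justify that "no common neighbour" already implies "distance $\geq 4$", rather than merely "distance $\neq 2$".
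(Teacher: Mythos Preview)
Your proof is correct and follows essentially the same approach as the paper: both argue that a path of length two between two such vertices would force the defining coordinates to coincide, yielding a contradiction. The only difference is cosmetic---you make the role of bipartiteness explicit, whereas the paper leaves implicit that ruling out a common neighbour suffices.
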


\begin{proof}
Suppose that
there exists in $H_q$ a path of length two $(a,b,c )_{1}  (j,y,z)_{0}  (a,b',c' )_{1}$ with $b\ne b'$ or $c\ne c'$. Then $y=aj+b=aj+b'$ and $z=a^2j+c=a^2j+c'$. Hence
$b=b'$ and $c=c'$   which is a contradiction. Similarly suppose
that there exists a path $(x,y, z)_{0}  (a,b,c )_{1}  (x,y', z')_{0}$ with $y\ne y'$ or $z\ne z'$.
Reasoning similarly, we obtain   $y=ax+b=y'$, and   and $z=a^2x+c=z'$
which is a contradiction.
\end{proof}

\begin{proposition}{\label{bip8}}
The graph $H_q$ from Definition \ref{Hq} is a $q$-regular bipartite of girth $8$ and order $2q^3$.
\end{proposition}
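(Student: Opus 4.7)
The plan is to verify the easy structural properties first, then rule out short cycles, and finally exhibit an $8$-cycle. The bipartition, the total order $2q^3$, and $q$-regularity are immediate from Definition~\ref{Hq}: the neighbourhood $N_{H_q}((a,b,c)_1)$ is explicitly a set of $q$ points indexed by $x \in \mathbb{F}_q$, and dually, given $(x,y,z)_0$ and any $a \in \mathbb{F}_q$ the unique $(a,b,c)_1$ adjacent to it has $b = y-ax$ and $c = z-a^2x$, giving $q$ neighbours on the other side as well.

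Because $H_q$ is bipartite, only $4$- and $6$-cycles need to be ruled out. A $4$-cycle would require two distinct vertices $(a,b,c)_1 \neq (a',b',c')_1$ to have two common neighbours; by Lemma~\ref{claim01} we must have $a \neq a'$, and then $ax+b = a'x+b'$ fixes $x$ uniquely, so at most one common neighbour exists. For $6$-cycles I would write a putative cycle as $(a_i,b_i,c_i)_1 - (x_i,y_i,z_i)_0$ for $i=1,2,3$; since any two same-side vertices lie at distance $2$ within the cycle, Lemma~\ref{claim01} forces the three $a_i$ and the three $x_i$ to be pairwise distinct. Chaining the linear adjacency equations around the cycle to eliminate the $b_i,c_i,y_i,z_i$ leaves
\[
(a_2-a_1)(x_2-x_1) + (a_3-a_1)(x_3-x_2) = 0,
\]
\[
(a_2^2-a_1^2)(x_2-x_1) + (a_3^2-a_1^2)(x_3-x_2) = 0.
\]
Writing $P = (a_2-a_1)(x_2-x_1) \neq 0$ and $Q = (a_3-a_1)(x_3-x_2) \neq 0$, the first gives $Q = -P$, and substituting into the second produces $(a_2-a_3)P = 0$, forcing $a_2 = a_3$, a contradiction.

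To confirm that the girth is exactly $8$ rather than larger, I would exhibit a concrete $8$-cycle, namely
\[
(0,0,0)_1 - (0,0,0)_0 - (1,0,0)_1 - (-1,-1,-1)_0 - (0,-1,-1)_1 - (0,-1,-1)_0 - (1,-1,-1)_1 - (1,0,0)_0 - (0,0,0)_1,
\]
and verify the eight incidences directly from Definition~\ref{Hq}; the eight vertices are distinct and the construction works for every $q \geq 2$. The only delicate step is the $6$-cycle computation, where one must ensure every division is by a quantity guaranteed nonzero by Lemma~\ref{claim01}; since the argument uses only nonvanishing of the differences $a_i - a_j$ and $x_i - x_j$, it is uniform in the characteristic of $\mathbb{F}_q$ and no characteristic-two degeneration arises.
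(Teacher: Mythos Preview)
Your argument is correct and follows the same overall strategy as the paper: verify regularity from the explicit neighbourhoods, use Lemma~\ref{claim01} to force the first coordinates $a_i$ (and $x_i$) on one side of a putative short cycle to be pairwise distinct, and then eliminate the $b_i,c_i,y_i,z_i$ from the adjacency relations to reach a contradiction. The differences are purely presentational. Where you reduce the $6$-cycle case by hand to $(a_2-a_3)P=0$, the paper packages the same linear system as a $2\times 2$ Vandermonde determinant in $a_0,a_1,a_2$ and invokes Cramer's rule to get $x_0=x_1=x_2$; the underlying algebra is identical. For the lower bound on the girth, the paper does not exhibit an $8$-cycle but instead notes (for $q\ge 3$) that a $q$-regular bipartite graph of girth at least $10$ must have more than $2q^3$ vertices, treating $q=2$ separately by inspection. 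Your explicit $8$-cycle is a nice alternative that handles all $q\ge 2$ uniformly; I checked the eight incidences and the distinctness of the eight vertices (including in characteristic~$2$, where $-1=1$), and they hold.
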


\begin{proof}For $q=2$ it can be
checked that $H_2$ consists of two disjoint cycles of length 8. Thus we assume that $q\ge 3$.
Clearly $H_q $ has order $2q^3$  and every vertex of $U_{1}$ has degree $q$. Let $(x,y,z)_{0}\in U_{0}$.
 By definition of $H_q$,
 \begin{equation}\label{cadual}
 N_{H_q}((x,y,z)_{0})= \left\{
 (a,~y-ax,~z-a^2x))_{1} : a\in \mathbb{F}_q  \right\}.
  \end{equation}
  Hence every vertex of $U_{0}$ has also degree $q$ and $H_q$ is $q$-regular.
Next, let us prove that $H_q$ has no cycles of length less than 8.  Otherwise suppose that there exists in $H_q$ a cycle
$$C_{2t+2}=(a_{0},b_{0},c_{0}  )_{1} (x_{0}, y_{0},z_{0} )_{0} (a_{1},b_{1},c_{1} )_{1} \cdots  (x_{t},y_{t},z_{t} )_{0} (a_{0},b_{0},c_{0}  )_{1} $$ of length $2t+2$ with $t\in \{1,2\}$.
By Claim 0, $a_k\ne a_{k+1}$ and $x_k\ne x_{k+1}$ (subscripts being
  taken modulo $t+1$).
Then
  $$ \begin{array}{lll}y_{k} =a_kx_k+b_{k}  &=&a_{k+1}x_k+b_{k+1}, \quad    \, k=0, \ldots, t , \\
  z_{k} =a_k^2x_k+c_{k} &=&a_{k+1}^2x_k+c_{k+1}, \quad    \, k=0, \ldots, t ,
 \end{array} $$
 subscripts $k$ being
  taken modulo $t+1$. Summing all these equalities we get
\begin{equation}\label{suma}\begin{array}{lll}
\displaystyle \sum_{k=0}^{t-1}(a_k -a_{k+1} )x_k&=&(a_{1} -a_{t} )x_{t}, \quad t =1,2.\\[2ex]
\displaystyle \sum_{k=0}^{t-1}(a_k^2-a_{k+1}^2)x_k&=&(a_{1}^2-a_{t}^2)x_{t}, \quad t=1,2.\end{array}\end{equation}
 If $t=1$, then     (\ref{suma}) leads to $(a_{1}-a_{1})(x_{1}-x_{0})=0~$. Then $a_{1}=a_{0}$ or $x_{1}=x_{0}$ which is a contradiction by Claim 0. This means that $H_q$ has no squares so that
  we may assume that $t=2$. The coefficient matrix of (\ref{suma}) has a Vandermonde determinant, i.e.:
$$
\left|\begin{array}{cc } a_{1}-a_{0}  & a_{0}-a_2  \\
 a_{1}^2-a_{0}^2 & a_{0}^2-a_2^2  \\
 \end{array}\right|=
 \left|\begin{array}{ccc}  1  & 1 &  1 \\
a_{1}   & a_{0}  &  a_{2}  \\
a_{1}^2 &a_{0}^2& a_{2}^2\\
\end{array}\right|
= \displaystyle \prod_{0\le k<j\le 2}(a_j-a_k)
$$
This determinant is different from zero
  because by Claim 0, $a_{k+1}\ne a_k$ (the subscripts being taken modulo $3$).   Using Cramer's rule to solve it
 we obtain $x_{1}=x_{0}=x_2$ which is a
contradiction with Claim 0.

 Hence, $H_q$ has girth at least 8. Furthermore, when $q\ge 3$ the
minimum number of vertices of a $q$-regular bipartite graph of
girth greater than 8 must be greater than $2q^3$.
 Thus we conclude
that the girth of $H_q$  is exactly 8.
\end{proof}

\begin{definition}\label{Bq}
Let $B_q$ be a bipartite graph with vertex set $V(B_q)=(\mathbb{F}_q^3,~\mathbb{F}_q^3)$,
and edge set $E(B_q)$ defined as follows:
$$
\mbox{For all }   a, b,c\in \mathbb{F}_q  : N_{B_q}((a,b,c)_{1} )=
    \{(j,~aj+b,~a^2j+2ab+c)_{0}: j \in \mathbb{F}_q \} .
$$
\end{definition}

\begin{lemma}\label{HqisoBq}
The graph $B_q$ is $q$-regular, has girth 8, order $2q^3$ and
is isomorphic to the graphs $H_q$.
\end{lemma}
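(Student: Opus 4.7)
The plan is to reduce everything to Proposition \ref{bip8} by exhibiting an explicit isomorphism $\phi : B_q \to H_q$. Comparing the two adjacency rules, the only discrepancy between an $H_q$-edge and a $B_q$-edge with the same parameters $(a,b,c)$ and $j=x$ is the extra summand $2ab$ in the third coordinate of the neighbour in $U_0$. This suggests absorbing $2ab$ into a relabelling of the third coordinate on the $U_1$-side while leaving the $U_0$-side untouched.

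Concretely, I would define
\[
\phi((a,b,c)_{1}) \,=\, (a,b,\,c+2ab)_{1}, \qquad \phi((x,y,z)_{0}) \,=\, (x,y,z)_{0}.
\]
First I would check that $\phi$ is a bijection: on each partite set it acts coordinatewise, and for fixed $a,b\in\mathbb{F}_q$ the map $c\mapsto c+2ab$ is a translation of $\mathbb{F}_q$, hence a bijection (this works in every characteristic; in characteristic $2$ the map is simply the identity and $B_q=H_q$ already).

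Next I would verify edge-preservation. A generic $B_q$-edge is
\[
(a,b,c)_{1}\,\sim_{B_q}\,(j,\,aj+b,\,a^{2}j+2ab+c)_{0},
\]
and applying $\phi$ produces the pair
\[
(a,b,\,c+2ab)_{1},\qquad (j,\,aj+b,\,a^{2}j+(c+2ab))_{0}.
\]
This matches exactly the $H_q$-adjacency rule in Definition \ref{Hq} with parameters $(a,b,c+2ab)$ and choice $x=j$, so it is an $H_q$-edge. Since $\phi$ is a bijection carrying edges to edges between two graphs of the same (finite) size, it is a graph isomorphism.

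Once the isomorphism is in hand, the four assertions of the lemma reduce to Proposition \ref{bip8}: $B_q\cong H_q$ is $q$-regular, has girth $8$, and has order $2q^{3}$. The main (only) obstacle is spotting the right relabelling; the computation $2ab$ comes straight from comparing the two adjacency formulas, so no further work is required beyond checking that the resulting map is well defined and preserves incidences.
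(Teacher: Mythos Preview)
Your argument is correct and is essentially identical to the paper's: the paper defines the same map $\sigma((a,b,c)_1)=(a,b,2ab+c)_1$, $\sigma((x,y,z)_0)=(x,y,z)_0$ and simply asserts it is an isomorphism, whereas you spell out the bijectivity and edge-preservation checks explicitly. No differences of substance.
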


\begin{proof}
Let $H_q$ be the bipartite graph from Definition \ref{Hq}. Since the map $\sigma: B_q \to H_q$ defined by
$\sigma((a,b,c)_{1})=(a,b,2ab+c)_{1}$ and $\sigma((x,y,z)_{0})=(x,y,z)_{0}$ is an isomorphism,
the result holds.
\end{proof}

\begin{proof} {\bf of Theorem \ref{main8}:}
We will (re)-construct the graph $\Gamma_q$ from the graph $B_q$ adding some new vertices and edges.
Reasoning as in Lemma \ref{claim01} the following claim follows:

 \emph{Claim 1: For any given $a\in \mathbb{F}_q $,  the vertices of the set
$\{(a,b,c)_{1}:  b,c \in \mathbb{F}_q \}$ are mutually at distance
at least four in $B_q$. Also for any given $
x\in \mathbb{F}_q $, the vertices of set $\{(x,y, z)_{0}:  y,z  \in \mathbb{F}_q \}$ are mutually at distance
at least four in $B_q$.}

As a consequence of Claim 1 we obtain the following claim.

\noindent\emph{Claim 2: For all $x,y\in\mathbb{F}_q $, the $q$
vertices of the set $ \{(x,y,j)_{0}: j\in
\mathbb{F}_q \}$ are mutually at
distance at least 6 in $B_q$.}

{\it Proof:}
By Claim 1,   the
  $q$ vertices $\{(x,y,j )_{0}:j\in
\mathbb{F}_q\}$   are mutually at distance at least 4. Suppose
that $B_q$ contains the following path of length four:
$$(x,y,j)_{0}  ~(a,b,c )_{1}~  (x',y',j')_{0} ~ (a',b',c' )_{1}  ~(x,y,j'')_{0}, \mbox{ for some } j''\ne j.$$
Then $y=  ax +b=  a' x+b'$ and $y'=   ax'+b  =  a'x'+b' $.
It follows that $  (a-a')(x-x') =0$, which is a contradiction because    $a\ne a'$ and $x\ne x'$ by Claim 1.  \quad $\Box$

Let $B'_{q}=B'_q[V_{0},V_{1}']$ be the bipartite graph obtained from $B_q=B_q[V_{0},V_{1}]$ by adding $q^2$ new vertices to $V_{1}$ labeled $(q,b,c)_{1}$,  $b,c\in \mathbb{F}_q$ (i.e., $V_{1}'=V_1\cup  \{(q,b,c)_{1}: b,c\in \mathbb{F}_q\}$), and new edges $N_{B'_{q}}((q,b,c)_{1})= \{(c, b ,j )_{0}: j \in \mathbb{F}_q \}$ (see Figure \ref{spanning}).
Then $B'_{q}$ has $|V'_{1}|+|V_{0}|=2q^3+q^2$ vertices such that every vertex of $V_{0}$ has degree $q+1$ and every vertex of $V_{1}'$ has still degree $q$. Note that the girth of $B'_{q}$ is 8 by  Claim 2. Further, Claim 1    partially holds in $B'_{q}$. We write this fact in the following claim.

\noindent\emph{Claim 3: For any given $a\in \mathbb{F}_q\cup\{q\} $,  the vertices of the set
$\{(a,b,c )_{1}:  b,c \in \mathbb{F}_q \}$ are mutually at distance
at least four in $B'_{q}$.}

  \bigskip

\noindent\emph{Claim 4: For all $a\in\mathbb{F}_q\cup \{q\} $ and for all $c\in\mathbb{F}_q $, the $q$
vertices of the set $ \{(a,t,c)_{1}: t\in
\mathbb{F}_q \}$ are mutually at
distance at least 6 in $B'_{q}$.}

{\it Proof:}
By Claim 3, for all $a\in\mathbb{F}_q \cup \{q\}  $ the $q$ vertices of
 $ \{(a,t, c)_{1}: t\in
\mathbb{F}_q\}$  are  mutually at distance at least 4   in  $B'_{q}$. Suppose
that there exists in $B'_{q}$ the following path of length four:
$$(a,t, c)_{1}  ~(x,y,z)_{0} ~ (a',t',c')_{1} ~ (x',y',z')_{0} ~ (a,t'',c)_{1}, \mbox{ for some } t''\ne t.$$

 \noindent If $a=q$, then $x=x'=c$, $y=t$, $y'=t''$ and $a'\ne q$ by Claim 3.  Then $y=a' x+t'=a' x'+t'=y'$ yielding that $t=t''$ which is a contradiction. Therefore
 $a\ne q$.   If $a'=q$, then $x=x'=c'$ and $y=y'=t'$. Thus $y=a  x+t=ax'+ t''=y'$ yielding that $t=t''$ which is a contradiction. Hence we may assume that $a'\ne q$ and $a\ne a'$ by Claim 3. In this case we have:
  $$
    \begin{array}{ll}
               y=ax+ t &= a'x+t' ;\\
           y'= ax'+t''      &=  a'x'+t'  ;\end{array}
    \begin{array}{ll}
              z=  a^2x +    2a t+c & =  a'^2x +  2a't' +c';\\
               z'=   a^2x' +    2 at''+c & =  a'^2x' +  2a't'+c'.
              \end{array}
              $$
 Hence
 \begin{eqnarray}
 \label{dosa} (a-a')(x-x')&=t''-t;\\
 \label{dosb} (a^2-a'^2)(x-x')&= 2a(t''- t).
\end{eqnarray}
If $q$ is even, (\ref{dosb}) leads to  $x=x'$ and (\ref{dosa}) leads to $t''=t$   which is a contradiction with our assumption.
Thus assume $q$ odd. If $a+a'=0$, then  (\ref{dosb}) gives
              $2a(t''-t)=0$, so that $ a=0$ yielding that $a'=0$ (because $a+a'=0$)
              which is again a contradiction.
If $a+a'\ne 0$, multiplying  equation  (\ref{dosa}) by $a+a'$  and resting both equations we obtain  $(2a-(a+a'))(t''-t)=0$. Then   $a=a'$ because $t''\ne t$, which is a
              contradiction to Claim 3.
  Therefore, Claim 4 holds.  \quad $\Box$

\bigskip

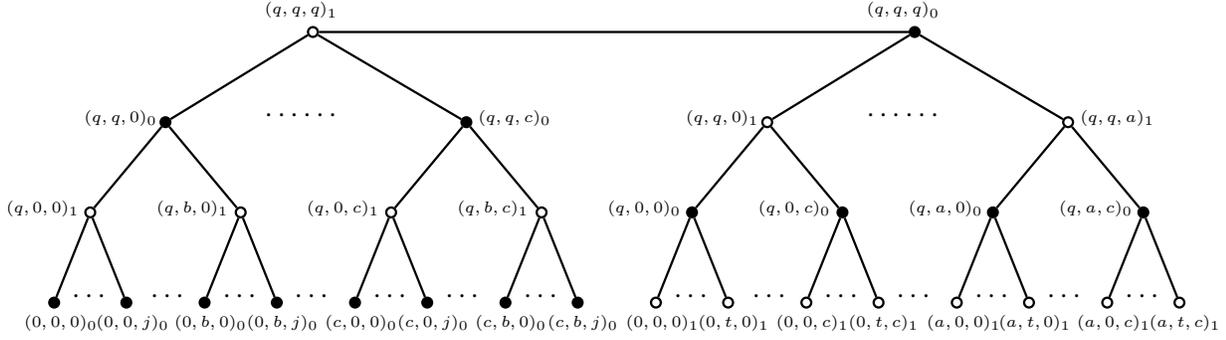
\begin{figure}
\begin{center}
\psset{unit=0.8mm, linewidth=0.03cm}
   \begin{pspicture}(75,-2)(100,47)

\cnode*(-5,0){1}{000}\put(-10,-4){\tiny$(0,0,0)_0$}
\put(-2,0){$\cdots$}
\cnode*(7,0){1}{00j}   \put(2,-4){\tiny$(0,0,j)_0$}
\put(11,0){$\cdots$}
\cnode*(20,0){1}{0b0} \put( 15,-4){\tiny$(0,b,0)_0$}
\put(23,0){$\cdots$}
\cnode*(32,0){1}{0bj}  \put(27,-4){\tiny$(0,b,j)_0$}
\put(35,0){$\cdots$}
 \cnode*(45,0){1}{c00}\put(40,-4){\tiny$(c,0,0)_0$}
 \put(48,0){$\cdots$}
\cnode*(57,0){1}{c0j}   \put(52,-4){\tiny$(c,0,j)_0$}
\put(60,0){$\cdots$}
\cnode*(70,0){1}{cb0} \put(65,-4){\tiny$(c,b,0)_0$}
\put(73,0){$\cdots$}
\cnode*(82,0){1}{cbj}  \put(77,-4){\tiny$(c,b,j)_0$}


\cnode(95,0){1}{L000}\put(90,-4){\tiny$(0,0,0)_1$}
\put(98,0){$\cdots$}
\cnode(107,0){1}{L0t0}   \put(102,-4){\tiny$(0,t,0)_1$}
\put(111,0){$\cdots$}
\cnode(120,0){1}{L00c} \put( 115,-4){\tiny$(0,0,c)_1$}
\put(123,0){$\cdots$}
\cnode(132,0){1}{L0tc}  \put(127,-4){\tiny$(0,t,c)_1$}
\put(135,0){$\cdots$}
 \cnode(145,0){1}{La00}\put(140,-4){\tiny$(a,0,0)_1$}
 \put(148,0){$\cdots$}
\cnode(157,0){1}{Lat0}   \put(152,-4){\tiny$(a,t,0)_1$}
\put(160,0){$\cdots$}
\cnode(170,0){1}{La0c} \put( 165,-4){\tiny$(a,0,c)_1$}
\put(173,0){$\cdots$}
\cnode(182,0){1}{Latc}  \put(177,-4){\tiny$(a,t,c)_1$}

%
%
%
 \cnode(1,15){1}{Lq00}   \put(-13,15){\tiny$(q,0,0)_1$}
 \cnode(26,15){1}{Lqb0}   \put(12,15){\tiny$(q,b,0)_1$}
 \cnode(51,15){1}{Lq0c}   \put(37,15){\tiny$(q,0,c)_1$}
 \cnode(76,15){1}{Lqbc}   \put(62,15){\tiny$(q,b,c)_1$}
\cnode*(101,15){1}{q00}   \put(87,15){\tiny$(q,0,0)_0$}
 \cnode*(126,15){1}{q0c}   \put(112,15){\tiny$(q,0,c)_0$}
 \cnode*(151,15){1}{qa0}   \put(137,15){\tiny$(q,a,0)_0$}
 \cnode*(176,15){1}{qac}   \put(162,15){\tiny$(q,a,c)_0$}

\cnode*(13.5,30){1}{qq0}   \put(0,30){\tiny$(q,q,0)_0$}
\put(30,30){$\cdots\cdots$}
\cnode*(63.5,30){1}{qqc}   \put(65.5,30){\tiny$(q,q,c)_0$}
\cnode(113.5,30){1}{Lqq0}   \put(100,30){\tiny$(q,q,0)_1$}
\put(130,30){$\cdots\cdots$}
\cnode(163.5,30){1}{Lqqa}   \put(165.5,30){\tiny$(q,q,a)_1$}
\cnode(38,45){1}{Lqqq}   \put(30,48){\tiny$(q,q,q)_1$}
\cnode*(138,45){1}{qqq}   \put(130,48){\tiny$(q,q,q)_0$}

\ncline[]{-}{Lq00}{000}
 \ncline[]{-}{Lq00}{00j}
 \ncline[]{-}{Lqb0}{0b0}
 \ncline[]{-}{Lqb0}{0bj}
 \ncline[]{-}{Lq0c}{c00}
 \ncline[]{-}{Lq0c}{c0j}
 \ncline[]{-}{Lqbc}{cb0}
 \ncline[]{-}{Lqbc}{cbj}
 \ncline[]{-}{q00}{L000}
  \ncline[]{-}{q00}{L0t0}
  \ncline[]{-}{q0c}{L00c}
  \ncline[]{-}{q0c}{L0tc}
  \ncline[]{-}{qa0}{La00}
  \ncline[]{-}{qa0}{Lat0}
  \ncline[]{-}{qac}{La0c}
  \ncline[]{-}{qac}{Latc}

  \ncline[]{-}{qq0}{Lq00}
  \ncline[]{-}{qq0}{Lqb0}
  \ncline[]{-}{qqc}{Lq0c}
  \ncline[]{-}{qqc}{Lqbc}
 \ncline[]{-}{Lqq0}{q00}
  \ncline[]{-}{Lqq0}{q0c}
 \ncline[]{-}{Lqqa}{qa0}
  \ncline[]{-}{Lqqa}{qac}
   \ncline[]{-}{Lqqq}{qq0}
 \ncline[]{-}{Lqqq}{qqc}
\ncline[]{-}{Lqqq}{qqq}
\ncline[]{-}{qqq}{Lqq0}
\ncline[]{-}{qqq}{Lqqa}

      \end{pspicture}
\caption{Spanning tree of $\Gamma_q$.   \label{spanning}}
\end{center}
\end{figure}
Let $B''_{q}=B''_{q}[V'_{0},V_{1}']$ be the graph obtained from $B'_{q}=B'_{q}[V_0,V'_1]$ by adding $q^2+q$ new vertices to $V_{0}$ labeled $(q,a,c)_{0}$, $a\in\mathbb{F}_q\cup \{q\}$, $c\in \mathbb{F}_q$,   and new edges $N_{B''_{q}}((q,a,c)_{0})= \{(a, t ,c )_{1}: t \in \mathbb{F}_q \}$ (see Figure \ref{spanning}).
Then $B''_{q}$ has $|V'_{1}|+|V_{0}'|=2q^3+2q^2+q$ vertices such that every vertex  has degree $q+1$ except the new added vertices which have degree $q$. Moreover the girth of $B''_{q}$ is 8 by  Claim 4.

\emph{Claim 5: For all $a\in\mathbb{F}_q\cup \{q\} $, the $q$
vertices of the set $ \{(q,a,j)_{0}: j\in
\mathbb{F}_q \}$ are mutually at
distance at least 6 in $B''_{q}$.}

 {\it Proof:}
Clearly   these $q$ vertices are  mutually at distance at least 4   in  $B''_{q}$. Suppose
that there exists in $B''_{q}$ the following path of length four:
$$(q,a, j)_{0}~ (a,b,j)_{1}~ (x,y,z)_{0}~ (a,b',j')_{1}~ (q,a,j')_{0}, \mbox{ for some } j'\ne j.$$
If $a=q$ then $x=j=j'$ which is a contradiction.  Therefore $a\ne q$. In this case $y=ax+b=ax+b'$ which implies that $b=b'$. Hence $z=a^2x+2ab+j=a^2x+2ab'+j'$ yielding that $j=j'$ which is again a contradiction. \quad $\Box$

Let $B'''_{q}=B'''_{q}[V_{0}',V''_{1}]$ be the graph obtained from $B''_{q}$ by adding $q+1$ new vertices to $V_{1}'$ labeled $(q,q,a)_{1}$,  $a\in \mathbb{F}_q\cup \{q\}$,    and new edges $N_{B'''_{q}}(q,q,a)_{1}= \{(q,  a,c )_{0}: c\in \mathbb{F}_q \}$, see Figure \ref{spanning}.
Then $B'''_{q}$ has $|V''_{1}|+|V_{0}'|=2q^3+2q^2+2q+1$ vertices such that every vertex  has degree $q+1$ except the new added vertices which have degree $q$. Moreover the girth of $B'''_{q}$ is 8 by  Claim 5 and clearly   these $q+1$ new vertices are mutually at distance 6. Finally,    the   $(q+1,8)$-cage $\Gamma_q$ is obtained by adding to $B'''_{q}$ another new vertex labeled $(q,q,q)_{0}$ and edges  $N_{\Gamma_q}((q,q,q)_{0})= \{(q,q,i)_{1}: i\in \mathbb{F}_q\cup \{q\}\}$.
\end{proof}


\subsection{Small $(q,8)$-graphs}

\begin{proof}{\bf of Theorem \ref{perfect}:}
$(i)$  Let $A=N_{\Gamma_q}^2[\alpha]\cup N_{\Gamma_q}^2[\beta] $ where $\alpha,\beta \in V(\Gamma_q)$
and $\beta\in N^3_{\Gamma_q}(\alpha)$.
Since the girth of $\Gamma_q$ is 8 there is a unique path of length three joining $\alpha$ and $\beta$.  Hence $|N_{\Gamma_q}^2[\alpha]\cap
N_{\Gamma_q}^2[\beta]|=|N_{\Gamma_q}(\alpha)\cap N^2_{\Gamma_q}(\beta)|+|N_{\Gamma_q}(\beta)\cap N^2_{\Gamma_q}(\alpha)|=2$ yielding that  $|A |=|N_{\Gamma_q}^2[\alpha]\cup
N_{\Gamma_q}^2[\beta]|=2(1+q+1+(q+1)q)-2=2(q+1)^2$.

Also since $\alpha$ and $\beta$ are at distance three, $  N_{\Gamma_q}^i(\alpha)$ and  $  N_{\Gamma_q}^i(\beta)$ are contained in different partite sets for all $i=0,1,2,3,4$.
Moreover, since the diameter
of $\Gamma_q$ is four,
$V(\Gamma_q)= N_{\Gamma_q}^2[\alpha] \cup N_{\Gamma_q}^3(\alpha)
\cup N_{\Gamma_q}^4(\alpha) =N_{\Gamma_q}^2[\beta] \cup N_{\Gamma_q}^3(\beta)
\cup N_{\Gamma_q}^4(\beta)$.   Hence if $v\not \in A$ then $v\in N_{\Gamma_q}^3(\alpha) \cup N_{\Gamma_q}^4(\alpha)$. If $v\in  N_{\Gamma_q}^3(\alpha)$ then $|N_{\Gamma_q}(v)\cap A|=|N_{\Gamma_q}(v)\cap N^2_{\Gamma_q}(\alpha)|=|N_{\Gamma_q}(v)\cap N^2_{\Gamma_q}[\alpha]|=1$ because the girth is 8.
If $v\in N_{\Gamma_q}^4(\alpha)$  then $|N_{\Gamma_q}(v)\cap A|=|N_{\Gamma_q}(v)\cap N^2_{\Gamma_q}(\beta)|=1$. Therefore $A$ is a perfect dominating set of $\Gamma_q$.

 $(ii) $ From
Theorem \ref{main8}, it follows that $ \bigcap_{c\in \mathbb{F}_q}N_{\Gamma_q}^2[(q,0,c)_{1}]\cap N_{\Gamma_q}^2[  (q,q,0)_{1}] =\{(q,q,q)_1\}\cup \{(0,0,c)_{1}: c\in \mathbb{F}_q\}$ and $N_{\Gamma_q}^2[(q,q,\xi)_{1}]=\bigcup_{j\in\mathbb{F}_q }
N_{\Gamma_q}[(q,\xi,j)_{0}]  \cup N_{\Gamma_q}[(q,q,q)_{0}]$.
 Let us denote by $F=\bigcup_{c\in \mathbb{F}_q}
N_{\Gamma_q}[(q,0,c)_{1}]\cup
N_{\Gamma_q}[(q,q,0)_{1}]\cup \{(0,0,c)_{1}: c\in \mathbb{F}_q\}$. We can check that
$F\cap N_{\Gamma_q}^2[(q,q,\xi)_{1}]=\{(q,q,q)_{0}, (q,q,0)_{1}\}$ (see
Figure \ref{estruc}).
 Hence  $|B|=|N_{\Gamma_q}^2[(q,q,\xi)_{1}]|+|F|-2=  1+(q+1)+q(q+1)+(q+1)(q+2)+q-2=2q^2+6q+2$. Let us prove that $B$ is a perfect dominating set.

For all  vertices  $(x,y,z)_{0}\in V_{0}\setminus B$ with $x\in \mathbb{F}_q\cup \{q\}$, $y,z \in \mathbb{F}_q$ we have:
$$\begin{array}{lll} N_{\Gamma_q}((x,y,z)_0)\cap B&=&N_{\Gamma_q}((x,y,z)_0)\cap N_{\Gamma_q}^2[(q,q,\xi)_{1}]\\[2ex]
&=&\left\{\begin{array}{ll}
\{(\xi,~y-\xi x, ~\xi^2x-2\xi y+z)_{1}\}\subset N_{\Gamma_q}[(q,\xi,y)_{0}]& \mbox{ if } x\ne q;\\
\{(q,q,y)_1\}\subset N_{\Gamma_q}[(q,q,q)_{0}]   & \mbox{ if } x=q.\\
\end{array}\right.\end{array}$$
Moreover, observe that $N_{\Gamma_q}((q,0,c)_{1})\setminus\{(q,q,c)_{0}\}=\{(c,0,j)_{0}:j\in \mathbb{F}_q\}$; and $N_{\Gamma_q}((0,0,c)_{1})=\{(x,0,c)_{0}:x\in \mathbb{F}_q\cup \{q\}\}$, see Figure \ref{estruc}. Then $$F\cap V_{0}=\{(x,0,c)_{0}:x\in \mathbb{F}_q\cup \{q\}, c\in \mathbb{F}_q\}\cup \{(q,q,x)_{0}:x\in \mathbb{F}_q\cup \{q\}\}.$$
 Also, for all  vertices  $(a,b,c)_{1}\in V_{1}\setminus B$ with $a\in \mathbb{F}_q\cup \{q\}$, $b,c \in \mathbb{F}_q$ we have:
$$N_{\Gamma_q}((a,b,c)_1)\cap B=N_{\Gamma_q}((a,b,c)_1)\cap F=\left\{\begin{array}{ll}
\{(-a^{-1}b,0,ab+c)_{0}\}& \mbox{ if } a\ne 0, q;\\
\{(q,0,c)_0\}& \mbox{ if } a=0;\\
\{(q,q,c)_0\}& \mbox{ if } a=q.\\
\end{array}\right.$$
Therefore $B$ is a perfect dominating set of $\Gamma_q$.

$(iii)$ Let denote $R_{0}= \displaystyle \bigcup_{x\in \mathbb{F}_q\cup \{q\}} N_{\Gamma_q}[(q,x,0)_{0}]\cup\left(\bigcap_{x\in \mathbb{F}_q\cup\{q\}} N^2_{\Gamma_q}[(q,x,0)_0]\right)$.
Theorem \ref{main8}, yields that $N_{\Gamma_q}((q,x,0)_{0})\setminus\{(q,q,x)_{1}\}=\{(x,a,0)_{1}:a\in \mathbb{F}_q\}$; and  $N_{\Gamma_q}((0,y,0)_{0})=\{(a,y,-2ay)_{1}:a\in \mathbb{F}_q\}\cup \{(q,y,0)_{1}\}$. Since $q$ is even, $-2ay=0$ and therefore  $$  \bigcup_{x\in \mathbb{F}_q\cup \{q\}} N_{\Gamma_q}\left((q,x,0)_{0}\right)=\bigcup_{y\in \mathbb{F}_q} N_{\Gamma_q}\left((0,y,0)_{0}\right)\cup N_{\Gamma_q}\left((q,q,q)_{0}\right)  \mbox{ (see Figure \ref{estruc2}).}$$
Hence $\displaystyle \bigcap_{x\in \mathbb{F}_q\cup\{q\}} N^2_{\Gamma_q}[(q,x,0)_0]= \{(0,y,0)_{0}: y\in \mathbb{F}_q\}\cup\{(q,q,q)_{0}\}$, implying
 that $|R_{0}|=(q+1)^2+2(q+1)$.

Let  $R_1=\displaystyle \bigcup_{x\in \mathbb{F}_q}N_{\Gamma_q}[(x,x,p(x))_{1}]
     \cup N_{\Gamma_q}[  (q,1,1)_{1}]\cup \left( \bigcap_{x\in \mathbb{F}_q}N_{\Gamma_q}^2[(x,x,p(x))_{1}]\cap N_{\Gamma_q}^2[  (q,1,1)_{1}]\right) $.
     By Theorem \ref{main8},  it is not difficult to check that $\displaystyle  \{ (x,x,p(x) )_{1}  :x\in \mathbb{F}_q \}\cup \{   (q,1,1)_{1}\} $ is a set of $q+1$ vertices mutually at distance four in $\Gamma_q$. Also $\displaystyle  \{ (x,1+x, p(x))_{1}  :x\in \mathbb{F}_q \}\cup \{    (q,0,1)_{1}\} $ is a set of $q+1$ vertices mutually at distance four in $\Gamma_q$.
 Let us show that
\begin{equation}\label{eq1}
     \displaystyle \bigcup_{x\in \mathbb{F}_q}N_{\Gamma_q}\left( (x,x,p(x))_{1} \right) \cup N_{\Gamma_q}\left(    (q,1,1)_{1} \right)  \\
   =\displaystyle \bigcup_{x\in \mathbb{F}_q}N_{\Gamma_q}\left( (x,1+x,p(x))_{1} \right) \cup N_{\Gamma_q}\left(   (q,0,1)_{1} \right) .\end{equation}
 Note that the  sets on  
 both sides have the same cardinality, then to prove the equality it is enough to show one inclusion. We have
 $$\begin{array}{lll} N_{\Gamma_q}((x,x,p(x))_{1})&=&\{(j,xj+x,x^2j+p(x))_{0}:j\in \mathbb{F}_q\}\cup \{(q,x,p(x))_{0}\},  \mbox{for all } x\in \mathbb{F}_q;\\[1ex]
N_{\Gamma_q}((q,1,1)_{1})&=&\{(1, 1, j)_{0}:j\in \mathbb{F}_q\}\cup \{(q,q,1)_{0}\}.
\end{array}$$
Furthermore, since $q$ is even,
$$\begin{array}{lll} N_{\Gamma_q}((x,1+x,p(x))_{1}) &=&\{(j,jx+1+x,x^2j+p(x))_{0}:j\in \mathbb{F}_q\}\cup \{(q,x,p(x))_{0}\},  \mbox{for all } x\in \mathbb{F}_q;\\[1ex]
N_{\Gamma_q}((q,0,1)_{1})&=&\{(1, 0, j)_{0}:j\in \mathbb{F}_q\}\cup \{(q,q,1)_{0}\}.
\end{array}$$
We can check that
$$\begin{array}{lll} N_{\Gamma_q}((x,x,p(x))_{1})\cap N_{\Gamma_q}((x,1+x,p(x))_{1})&=&  \{(q,x,p(x))_{0}\},  \mbox{for all } x\in \mathbb{F}_q;\\[1ex]
N_{\Gamma_q}((q,1,1)_{1})\cap N_{\Gamma_q}((q,0,1)_{1})&=&  \{(q,q,1)_{0}\}.
\end{array}$$
    For all $j\in \mathbb{F}_q$, $j\ne 1$, $(j,xj+x,x^2j+p(x))_{0}\in N_{\Gamma_q}\left( (x,x,p(x))_{1} \right) \cap N_{\Gamma_q}\left( (v,1+v,p(v)  )_{1} \right) $  where $v=(1+j)^{-1}+x$ because $q$ is even. And  $(1,0, x+1)_{0}\in N_{\Gamma_q}\left( (x,x,p(x))_{1} \right) \cap N_{\Gamma_q}\left( (q,0,1)_{1} \right) $  (see Figure \ref{estruc2}) because $p(x)=1+x+x^2$ and $p(x)+x^2=1+x$.
Furthermore,   for all $j\in \mathbb{F}_q$,   $(1,1,j)_{0}\in N_{\Gamma_q}\left( (q,1,1)_{1} \right) \cap N_{\Gamma_q}\left( (a,1+a , 1+a+a^2)_{1} \right) $  where $a= 1+j $.  Hence 
equality (\ref{eq1}) holds. This implies
that $\bigcap_{x\in \mathbb{F}_q}N_{\Gamma_q}^2[(x,x,p(x))_{1}]\cap N_{\Gamma_q}^2[  (q,1,1)_{1}]=
\{(x,1+x,p(x))_{1}: x\in \mathbb{F}_q\} \cup \{(q,0,1)_{1}\}. $ Thus $|R_1|=(q+1)^2+2(q+1)$.

 To finish the proof note that every vertex   $f\in R_r$, $r=0,1$,  with $|N_{\Gamma_q}(f)\cap R_r|=2$ has exactly one neighbor in $R_{r+1}$ and $q-2$ more neighbors in $V(\Gamma_q)\setminus C$. Moreover, every vertex $v\in V(\Gamma_q)\setminus C$ has $|N_{\Gamma_q}(v)\cap C|\le 1$ because the diameter of the subgraph induced by $C=R_0\cup R_1$ is 5 and the girth of $\Gamma_q$ is 8. This implies that $|N_{G_q}(R_0\cup R_1)\cap V(\Gamma_q)\setminus C |=2(q-2)(q+1)^2=2(q^3-3q-2)=|V(\Gamma_q)\setminus C |$
yielding that $|N_{\Gamma_q}(v)\cap C|= 1$ for all $v\in V(\Gamma_q)\setminus C$. Therefore $C$ is a perfect dominating set.
\end{proof}


\begin{lemma}\label{cll1}Let $G_q$ be the graph from Definition \ref{GrGq}. Define $P=\{(q,y,z)_0: y,z\in \mathbb{F}_q, y\not=0,1,\xi\}$ and $R=N_{G_q}(P)\cap N^5_{G_q}((q,1,0)_0)$. Then  $|R|=|P|=q(q-3)$,
 $|N_{G_q}(R) |= 2q( q-2)$ and every $v\in N_{G_q}(R)\setminus P$   has exactly $1$ neighbor   in $N_{G_q}^5((q,1,0)_0)\setminus R$.

\end{lemma}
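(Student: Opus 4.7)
The plan is to carry out the whole proof by explicit computation in the coordinate description of Definition~\ref{gammaq}, using the dual edge formulas of Remark~\ref{rem}(ii) and the explicit list of removed vertices $B$ from the proof of Theorem~\ref{perfect}(ii).

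First I would characterize $N_{G_q}(P)$. For $p=(q,y,z)_{0}\in P$ with $y\notin\{0,1,\xi\}$, Remark~\ref{rem}(ii) gives $N_{\Gamma_q}(p)=\{(y,a,z)_{1}:a\in\mathbb{F}_q\}\cup\{(q,q,y)_{1}\}$; the type-$2$ vertex $(q,q,y)_{1}$ lies in $B$, whereas none of the $q$ type-$1$ vertices does. Hence $N_{G_q}(P)=\{(y,a,z)_{1}:y\in\mathbb{F}_q\setminus\{0,1,\xi\},\ a,z\in\mathbb{F}_q\}$. Each such vertex has first coordinate different from $1$ and $q$, so it is non-adjacent to $(q,1,0)_{0}$ in $\Gamma_q$; since $\Gamma_q$ has diameter $4$ and girth $8$, there is a unique $\Gamma_q$-path of length $3$ between them, of the form $(q,1,0)_{0}-(1,a',0)_{1}-(x,x+a',x+2a')_{0}-(y,a,z)_{1}$ with $a',x$ uniquely determined by $(y,a,z)$. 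The internal $V_{0}$-vertex is the only one on this path that may lie in $B$, and it does so iff $x+a'=0$; eliminating the parameters, this is equivalent to $yz=a(1-y^{2})$. Therefore the vertices of $N_{G_q}(P)$ whose $G_q$-distance to $(q,1,0)_{0}$ exceeds $3$ are exactly $\{(y,a,a(1-y^{2})/y)_{1}:y\in\mathbb{F}_q\setminus\{0,1,\xi\},\ a\in\mathbb{F}_q\}$, of cardinality $q(q-3)$.

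Next I would upgrade ``distance greater than $3$'' to ``distance exactly $5$'' by exhibiting, for each such candidate $r$, an explicit length-$5$ path in $G_q$ from $(q,1,0)_{0}$ to $r$. The idea is to reroute through a different initial line $(1,a'',0)_{1}$ with $a''\neq a'$: the vertices $(1,a'',0)_{1}$ and $r$ lie at $\Gamma_q$-distance $4$ and admit several $4$-paths in $\Gamma_q$, at least one of which can be shown to avoid $B$ by a suitable choice of $a''$ and of the terminal neighbor of $r$; concatenating such a $4$-path with the edge $(q,1,0)_{0}-(1,a'',0)_{1}$ produces the desired $5$-path. This yields $|R|=|P|=q(q-3)$.

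For the remaining claims I would expand the $G_q$-neighborhood of each $r=(y,a,a(1-y^{2})/y)_{1}\in R$ as the disjoint union of the single $P$-vertex $(q,y,a(1-y^{2})/y)_{0}$ and $q-1$ further neighbors $(x,yx+a,y^{2}x+a(y^{2}+1)/y)_{0}$ with $x\neq -a/y$. Since girth $8$ forces any two distinct $r_{1},r_{2}\in R$ to share at most one common neighbor, equating the second and third coordinates of the corresponding neighbor sets yields a linear system which either has a unique solution or none; a case analysis on the pairs $(y_{1},y_{2})$ decides whether this candidate genuinely belongs to $G_q$ or is excluded because it lies in $B$. Summing the multiplicities gives $|N_{G_q}(R)|=2q(q-2)$. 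For the final assertion, each $v\in N_{G_q}(R)\setminus P$ is of the form $(x_{0},y_{0},z_{0})_{0}$ with $x_{0}\in\mathbb{F}_q$; the overlap analysis identifies its neighbors in $R$, and applying the $3$-path survival test from the first step to the remaining $V_{1}$-neighbors of $v$ isolates exactly one satisfying $yz=a(1-y^{2})$, which is therefore the unique neighbor of $v$ in $N_{G_q}^{5}((q,1,0)_{0})\setminus R$. The main technical obstacle is the overlap computation for $|N_{G_q}(R)|$: each candidate collision between two elements of $R$ must be carefully matched against the removal conditions for $G_q$, and the contributions from $Q$-type and $M$-type neighbors must be assembled to yield precisely $2q(q-2)$.
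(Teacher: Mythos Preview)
Your derivation of $|R|=q(q-3)$ agrees with the paper's: both analyse when the unique length-$3$ path in $\Gamma_q$ from $(q,1,0)_0$ to a vertex $(y,a,z)_1\in N_{G_q}(P)$ hits a removed vertex $(\,\cdot\,,0,\,\cdot\,)_0$, arriving at the same algebraic condition.

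For $|N_{G_q}(R)|$ and the final assertion your plan diverges sharply from the paper. The paper does \emph{not} compute overlaps of the neighbourhoods $N_{G_q}(r)$; it runs a squeezing argument instead. Each $v\in N_{G_q}(R)\setminus P$ has at most $q-3$ neighbours in $R$ (at most one per admissible first coordinate $y$, since $R$-vertices sharing a first coordinate are mutually at distance $\geq 4$) and at most one neighbour in $N^5\setminus R$, hence at least two neighbours in $N^3_{G_q}((q,1,0)_0)$. This yields $|N^3|\geq |N_{G_q}(P)\setminus R|+2|N_{G_q}(R)\setminus P|$, and since $|N^3|=q(q-1)^2$ one gets $|N_{G_q}(R)\setminus P|\leq q(q-1)$. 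The reverse inequality comes from the fact that $R$ contains $q$ vertices at pairwise distance $\geq 4$. Equality throughout then delivers both $|N_{G_q}(R)|=2q(q-2)$ and the ``exactly one neighbour in $N^5\setminus R$'' statement simultaneously, with no case analysis at all.

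Your explicit-computation route is in principle viable, but the last step is garbled. The condition $yz=a(1-y^{2})$ you invoke is the membership test for $R$ itself, not for $N^5\setminus R$; so ``isolating exactly one remaining neighbour satisfying it'' cannot possibly produce a vertex of $N^5\setminus R$. What is actually needed is the observation that $N^5\setminus R$ consists of the vertices $(1,t,z)_1$ with $z\neq 0$ (whose unique $\Gamma_q$-geodesic to $(q,1,0)_0$ went through the removed vertex $(q,q,1)_1$, a different obstruction from the one governing $R$), followed by a check that the unique first-coordinate-$1$ neighbour of any $v=(j,\,\cdot\,,\,\cdot\,)_0\in N_{G_q}(R)\setminus P$ has nonzero third coordinate. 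This is true---for $v$ adjacent to $(y,a,a(1-y^{2})/y)_1\in R$ that third coordinate factors as $(y-1)^{2}(j+a/y)\neq 0$---but it is a different computation from the one you describe, and you would still owe a verification that no neighbour of $v$ with first coordinate in $\{0,q\}$ lies in $N^5$.
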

\begin{proof}
First, note that for all $y\in  \mathbb{F}_q\setminus \{0,\xi\}$ the set of $q$ vertices $\{(q,y,z)_0: z\in \mathbb{F}_q\}$  are mutually at distance 6 in $G_q$ because they were $q$ neighbors in $\Gamma_q$ of the removed vertex $(q,q,y)_1$. Moreover, the vertices $(x,0,z)_0$  with second coordinate zero have been removed from $\Gamma_q$ to obtain $G_q$.   Therefore according to Definition \ref{gammaq}, the paths of length four in $G_q$ joining $(q,1,0)_0$ and a vertex from $P$ are as follows (see Figure \ref{arbol}):

$(q,1,0)_0~(1,b,0)_1(x,x+b,x+2b )_0~ (y, t,z)_1~ (q,y,z)_0$, for all $b,x,t\in \mathbb{F}_q$ such that $b+x\ne 0$.

\noindent Hence $x+b=xy+t$ and $x+2b=y^2x+2yt+z$ for all $b,x,t\in \mathbb{F}_q$ such that $b+x\ne 0$. The claim follows because if $x+b=xy+t=0$, then  $x+2b=y^2x+2yt+z$ gives that  $t=(1-y^2)^{-1}yz$, that is, $(y,(1-y^2)^{-1}yz,z)_1\in R$ is the unique neighbor in $R$ of $(q,y,z)_0\in P$. Therefore every $(q,y,z)_0\in P$ has a unique neighbor $(y,b,z)_1\in R$ yielding that
 $|R|=|P|=q(q-3)$.

It follows that every $v\in N_{G_q}(R)\setminus P$ has at most
 $|R|/q=q-3$ neighbors in $R$ because for each $y$ the vertices from the set $\{(q,y,z)_0:  z \in \mathbb{F}_q \}\subset P$  are mutually at distance 6. Furthermore, every $v\in N_{G_q}(R)\setminus P$ has at most one neighbor in $N_{G_q}^5((q,1,0)_0)\setminus R$ because the vertices $\{(q,1,z)_0: z \in \mathbb{F}_q, z\ne 0\}$ are mutually at distance 6. Therefore every $v\in N_{G_q}(R)\setminus P$   has at least two neighbors  in $ N_{G_q}^3((q,1,0)_0)$. Thus denoting $K=N_{G_q}(N_{G_q}(R)\setminus P)\cap N_{G_q}^3((q,1,0)_0)$ we have
 \begin{equation}\label{auxx1}  |K|\ge  2 |N_{G_q}(R)\setminus P|  .
 \end{equation}

\noindent Moreover, observe that $ (N_{G_q}(P)\setminus R)\cap K =\emptyset$ and since the elements of $P$ are mutually at distance at least 4 we obtain that $|N_{G_q}(P)\setminus R|=q|P|-|R|=(q-1)|P|$. Hence
$$ |N_{G_q}^3((q,1,0)_0)| \ge  | N_{G_q}(P)\setminus R |+|K|
=  (q-1)|P|+|K|.
  $$
Since  $|N_{G_q}^3((q,1,0)_0)|=q(q-1)^2$ and $|P|=q(q-3)$ we obtain that $|K|\le 2q(q-1)$ yielding by (\ref{auxx1}) that  $|N_{G_q}(R)\setminus P|\le q(q-1)$. As $P$ contains at least $q$ elements mutually at distance 6, so $R$ contains at least $q$ elements mutually at distance 4,  thus we have
  $|N_{G_q}(R)\setminus P|\ge q^2-q$. Therefore $|N_{G_q}(R)\setminus P|= q^2-q$ and all the above inequalities are actually equalities. Thus $|N_{G_q}(R) |=  q^2-q+|P|=2q( q-2)$ and every $v\in N_{G_q}(R)\setminus P$   has exactly $1$ neighbor in  $N_{G_q}^5((q,1,0)_0)\setminus R$.
\end{proof}

\begin{proof}{\bf of Theorem \ref{q-1}:} Let $G_q $ be the $q$-regular graph from Definition \ref{GrGq} and consider the sets $P=\{(q,y,z)_0: y,z\in \mathbb{F}_q, y\not=0,1,\xi\}$ and $R=N_{G_q}(P)\cap N^5_{G_q}((q,1,0)_0)$. In particular the vertices of the set $\{(q,1,z)_0:  z\in \mathbb{F}_q\}$ are mutually at distance 6, then by Lemma \ref{cll1}, we have
$$\begin{array}{lll}  | N^4_{G_q}((q,1,0)_0)\setminus N_{G_q}(R)|&=&\displaystyle |\bigcup_{z\in \mathbb{F}_q\setminus \{0\}} (N^2_{G_q}((q,1,z)_0)\cup P)\setminus N_{G_q}(R)|\\[1ex]
&=& q(q-1)^2+q(q-3)-2q(q-2)\\ &=& q(q-1)   (q-2).\end{array}$$
 Let us denote by $E[A,B]$ the set of edges between two set of vertices $A$ and $B$. Then
$|E[N^3_{G_q}((q,1,0)_0),N^4_{G_q}((q,1,0)_0)]|=q(q-1)^3$ and   $|E[N^3_{G_q}((q,1,0)_0),N^4_{G_q}((q,1,0)_0)\setminus N_{G_q}(R)]|=q(q-1)^2(q-2)$. Therefore, $|E[N^3_{G_q}((q,1,0)_0),  N_{G_q}(R)]|=q(q-1)^3-q(q-1)^2(q-2)=q(q-1)^2=|N^3_{G_q}((q,1,0)_0)|$, which  implies that every $v\in N_{G_q}^3((q,1,0)_0)$   has exactly one neighbor   in $N_{G_q}(R) $. It follows   that  $S:=\bigcup_{z\in \mathbb{F}_q} N_{G_q}[(q,1,z)_0]\cup   N_{G_q}[R] $ is a perfect dominating set of $G_q$.
Furthermore, by Lemma  \ref{cll1}, $|S|=q^2+q+ q(3q-7)=4q^2-6q$.
 Therefore a $(q-1)$-regular graph of girth 8  can be obtained by deleting from $G_q$  the indicated perfect dominating set $S$, see Figure \ref{arbol}. This graph  has order $2q(q^2-2)- 2q(2q -3)=2q(q -1)^2 $.
\end{proof}

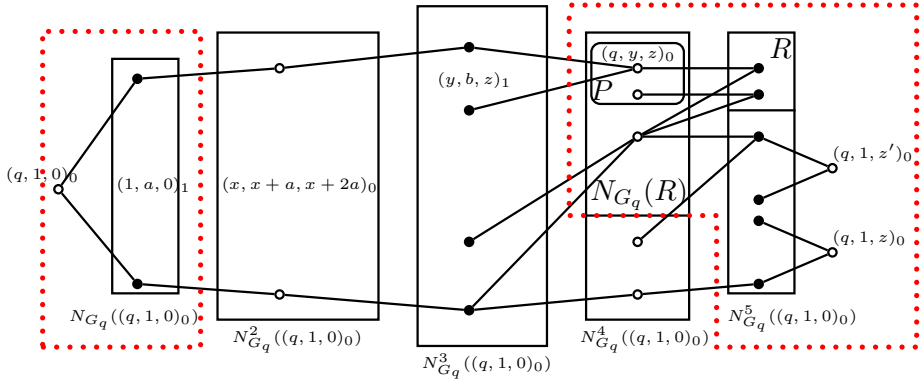
\begin{figure}
\begin{center}
\psset{unit=0.7mm, linewidth=0.03cm}
   \begin{pspicture}(50,-2)(100,77)

\cnode(-5,30){1}{q10}\put(-15,32){\tiny$(q,1,0)_0$}
\cnode*(10,12){1}{a1}   \cnode*(10,51){1}{a2}

\psframe[](5,10)(18,55) \put(6,30){\tiny$(1,a,0)_1$}
\put(-3,5){\tiny$N_{G_q}((q,1,0)_0)$}

\psframe[](25,5)(56,60) \put(26,30){\tiny$(x,x+a,x+2a)_0$}
\cnode(37,10){1}{x1}   \cnode(37,53){1}{x2}
\put(28,1){\tiny$N_{G_q}^2((q,1,0)_0)$}

\psframe[](63,0)(88,65) \put(67,50){\tiny$(y,b,z)_1$}
\cnode*(73,7){1}{y1}  \cnode*(73,45){1}{y0}
\cnode*(73,57){1}{y2}
\cnode*(73,20){1}{y3}
\put(64,-4){\tiny$N_{G_q}^3((q,1,0)_0)$}

 \psframe[](95,5)(115,60) \put(98,55){\tiny$(q,y,z)_0$}
 \cnode(105,53){1}{q1}
 \cnode(105,48){1}{q0}
 \psframe[framearc=0.3](96,46)(114,58)
   \put(96,47){$P$}
 \cnode(105,40){1}{q2}
 \put(96,28){$N_{G_q}(R)$}
 \cnode(105,10){1}{q3}
  \cnode(105,20){1}{q4}
\psline(95,25)(115,25)
 \put(94,1){\tiny$N_{G_q}^4((q,1,0)_0)$}

 \psframe[](122,10)(135,60)
 \cnode*(128,53){1}{r1}  \put(130,55){$R$}
 \psline(122,45)(135,45)
 \put(122,5){\tiny$N_{G_q}^5((q,1,0)_0)$}
 \cnode*(128,48){1}{r0}

 \cnode*(128,12){1}{n1}
 \cnode*(128,24){1}{n2}
 \cnode*(128,28){1}{n3}
 \cnode*(128,40){1}{n4}

 \cnode(142,18){1}{v1}\put(142,20){\tiny$ (q,1,z)_0$}
 \cnode(142,34){1}{v2}  \put(142,36){\tiny$ (q,1,z')_0$}

 \ncline[]{-}{q10}{a1}
 \ncline[]{-}{q10}{a2}
 \ncline[]{-}{a1}{x1}
 \ncline[]{-}{a2}{x2}
 \ncline[]{-}{x1}{y1}
 \ncline[]{-}{x2}{y2}
 \ncline[]{-}{y2}{q1}
 \ncline[]{-}{y0}{q1}
 \ncline[]{-}{y1}{q3}
 \ncline[]{-}{y1}{q2}
 \ncline[]{-}{r1}{q1}
 \ncline[]{-}{r1}{q2}
  \ncline[]{-}{v1}{n1}
 \ncline[]{-}{v1}{n2}
  \ncline[]{-}{v2}{n3}
 \ncline[]{-}{v2}{n4}

  \ncline[]{-}{n4}{q2}
   \ncline[]{-}{n4}{q4}
    \ncline[]{-}{y3}{q2}
 \ncline[]{-}{q3}{n1}
  \ncline[]{-}{q0}{r0}
   \ncline[]{-}{q2}{r0}

 \psset{dotsize=4pt 0}
\psline[linestyle=dotted,linecolor=red,linewidth=2pt](92,65)(158,65)(158,0)(120,0)(120,25)(92,25)(92,65)
\psline[linestyle=dotted,linecolor=red,linewidth=2pt](-8,0)(-8,60)(22,60)(22,0)(-8,0)
%



%
%
%


      \end{pspicture}
\caption{Structure of the graph $G_q$. The eliminated vertices are inside the dotted box. \label{arbol}}
\end{center}
\end{figure}


\bigskip







\end{document}